\theoremstyle{plain}
\newtheorem{theorem}{Theorem}
\newtheorem{proposition}{Proposition}
\newtheorem{Cor}{Corollary}
\theoremstyle{definition}
\theoremstyle{remark}
\newtheorem{remark}{Remark}
\begin{document}
\title{An  Inverse Problem for  Localization Operators}
\author{Lu\'{\i}s Daniel Abreu }
\address{Institut f\"ur Mathematik, Universit\"at Wien, Alserbachstrasse 23
  A-1090 Wien,  on leave from Department of Mathematics of University of Coimbra, Portugal.
 }\email{daniel@mat.uc.pt}
\author{Monika D\"{o}rfler.}
\address{Institut f\"ur Mathematik, Universit\"at Wien, Alserbachstrasse 23.
 }
 \email{monika.doerfler@univie.ac.at} 
\thanks{Daniel Abreu was supported by the ESF activity "Harmonic and Complex Analysis and its Applications", by FCT (Portugal) project PTDC/MAT/114394/2009 and CMUC through COMPETE/FEDER. 
Monika D\"orfler was supported by the Austrian Science Fund (FWF):[T384-N13] {\em Locatif}}

\begin{abstract}
A classical result of time-frequency analysis, obtained by I.~Daubechies in 1988, states that  the  eigenfunctions  of a time-frequency localization operator with circular localization domain and  Gaussian 	analysis window are the Hermite functions.
In this contribution,  a converse of Daubechies' theorem is proved. More precisely, it is shown that, for simply connected localization domains, if one of the eigenfunctions of a time-frequency localization operator with Gaussian window is a Hermite function, then its localization domain is a disc.
The general problem of obtaining, from some knowledge of its eigenfunctions,  information about the symbol of a time-frequency localization operator, is denoted as {\it the inverse problem}, and the problem studied by Daubechies  as {\it the direct problem} of time-frequency analysis.
Here, we  also solve the corresponding problem for wavelet localization, providing the inverse problem analogue of the direct problem studied by Daubechies and Paul.\\
\today
\end{abstract}

\maketitle

\section{Introduction}
\begin{center}\begin{it}
$[...]$ The fundamental things apply\\
As time goes by.\end{it}\\
\begin{scriptsize}(Herman Hupfeld)\\[.5cm]\end{scriptsize}\end{center}
\begin{sloppypar}

\emph{As time goes by}, most  real-life signals of interest  change their \emph{frequency} properties.
Therefore,  a signal description by means of  \emph{%
time-frequency analysis} is often preferable to   the signal's Fourier transform, which reliably yields frequency information, but without any localization in time.
The core purpose of time-frequency analysis is to represent a given signal
as a function in the time-frequency or in the time-scale plane. However, in
real world applications like optics and wireless communications, one can
only \textquotedblleft sense\textquotedblright\ a signal within a certain
region of those planes. This means that, in practice,  the part of
the signal outside the region of interest is neglected  and  only its \textquotedblleft
localized\textquotedblright\ version is observed. \emph{Localization operators} turn
this observation process into rigorous mathematical terms. They transform a
given signal into one that is localized in a  given region by reducing the
 signal energy outside that region to a negligible amount.

The first  approach to time-frequency  localization, introduced in 1961, consists in separately  selecting  time-
and frequency-content, and is  described in a famous
series of papers known as the \textquotedblleft Bell labs
papers\textquotedblright. We refer to Slepian's review \cite{sl83} for
an account of this beautiful body of work. In 1988,  Daubechies added a new  perspective by introducing operators, that localize directly
in the time-frequency plane~\cite{da88} and, together with Paul~\cite{dapa88}, 
extended the analysis to the time-scale plane. The time-frequency plane is
associated to the \emph{short-time Fourier transform} and the time-scale
plane is associated to the \emph{wavelet Transform}. We begin our
presentation by defining the short-time Fourier transform, which leads to the
concept of time-frequency localization operators.

The short-time Fourier (or Gabor) transform of a function or distribution $f$ with respect to a
window function $g\in L^{2}(\mathbb{R})$ is defined to be, for $z=(x,\xi
)\in \mathbb{R}^{2}$: 

\begin{equation}
\mathcal{V}_{g}f(z)=\mathcal{V}_{g}f(x,\xi )=\int_{\mathbb{R}}f(t)\overline{%
g(t-x)}e^{-2\pi i\xi t}dt  \label{Gabor}
\end{equation}
where the overline denotes complex conjugation.
We let $\pi (z)g(t)  = g(t-x)e^{2\pi i\xi t}$ and observe that $f$ can be resynthesized from $\mathcal{V}_{g}f$ as 
\begin{equation*}
f=\frac{1}{\|g\|_2^2}\int_{\mathbb{R}^{2}}\mathcal{V}_{g }f(z)\pi (z)g
\,dz .
\end{equation*}%
Given a symbol $\sigma \in L^{1}(\mathbb{R}^{2})$, \textit{time-frequency localization operators} $H_{\sigma ,g}$  are defined by 
\begin{equation*}
H_{\sigma ,g}f=\int_{\mathbb{R}^{2}}\sigma (z)\mathcal{V}_{g }f(z)\pi (z)g
\,dz=\mathcal{V}_{g}^{\ast }\sigma \mathcal{V}_{g}f.
\end{equation*}%
In signal processing it is very common to modify a signal $f$ by acting on its time-frequency coefficients $\mathcal{V}_{g}f$, for example in order to achieve noise reduction~\cite{ma09-1}; the corresponding
localization operators have been the object of  research in time-frequency analysis,~\cite{feno01,cogr03}.
In~\cite{da88}, Daubechies considered the window $g(t)=\varphi (t)=2^{\frac{1%
}{4}}e^{-\pi t^{2}}$, the symbol $\sigma (z)=\chi _{\Omega }(z)$, i.e. the indicator function of  a set $\Omega\subset\mathbb{R}^2$, and
investigated the eigenvalue problem 
\begin{equation}
H_\Omega f :=H_{\chi _{\Omega ,\varphi }}f=\lambda f  \label{eq:EVP}
\end{equation}%
for the case where $\Omega  $ is a disc centered at zero. She concluded, that in this
situation, the eigenfunctions of $H_{\chi _{\Omega ,\varphi }}$ are the
Hermite functions. Consequently, since,  $H_{\Omega_2\setminus \Omega_1} = H_{\Omega_2}- H_{\Omega_1}$ for two sets $\Omega_1\subset\Omega_2$, the Hermite functions are also eigenfunctions with respect to domains in the form of an annulus centered at zero and for any union of annuli.

The problem (\ref{eq:EVP}) is important in time-frequency
analysis, because its solutions are the functions with best concentration in
the subregion $\Omega $ \ of the time-frequency plane, where we consider the time-frequency concentration of a function $f$ in $\Omega\subset\mathbb{R}^2$  defined as 
\begin{equation}\label{Eq:TFcon}
	\mathcal{C}_\Omega (f) = \frac{\int_\Omega |\mathcal{V}_\varphi f (z)|^2 dz}{\|f\|_2^2}\, .
\end{equation}


In this paper we will be concerned with the inverse situation of the one
considered by Daubechies. This leads us to the following question:

\begin{itemize}
\item Given a localization operator with unknown localization domain $\Omega 
$, can we recover the shape of $\Omega $\ from information about its
eigenfunctions?
\end{itemize}

This is a new type of inverse problem, and we will call it the \textquotedblleft 
\emph{inverse problem of time-frequency localization\textquotedblright }. We solve the problem in the case where explicit computations can be
made, which is the set-up of~\cite{da88}.
Our main contribution is the following.

\begin{theorem}
\label{Th:main1}
Let $\Omega\subset\mathbb{R}^2$ be simply connected. If one of the eigenfunctions of the localization operator $%
H_{\Omega }$ is a Hermite function, then $\Omega $ must be a disk centered at $0$. 
\end{theorem}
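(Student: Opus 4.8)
The plan is to pass to the Bargmann--Fock picture, where the Gaussian short-time Fourier transform $\mathcal{V}_\varphi$ becomes, up to the Gaussian weight, the Bargmann transform, the Hermite function $h_n$ is sent to the monomial $e_n(z)=c_n z^n$, and $H_\Omega$ becomes the Toeplitz operator $T_\Omega F(w)=\int_\Omega F(z)\,e^{\pi w\bar z}e^{-\pi|z|^2}\,dA(z)$ on Fock space. The hypothesis that $h_n$ is an eigenfunction of $H_\Omega$ is then equivalent to $T_\Omega e_n=\lambda e_n$, which, upon expanding $e^{\pi w\bar z}$ and matching powers of $w$, is equivalent to the moment conditions
\[
\int_\Omega z^n\bar z^j\,e^{-\pi|z|^2}\,dA(z)=0\qquad(j\ge 0,\ j\neq n).
\]
My goal is then to show that these conditions force $|z|^2$ to be constant on $\partial\Omega$.

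I would first carry out the case $n=0$, which already contains the whole mechanism. Here the conditions read $\int_\Omega z^{j}e^{-\pi|z|^2}\,dA=0$ for all $j\ge 1$. Using that $-\tfrac1\pi z^{j-1}e^{-\pi|z|^2}$ is a $\partial_{\bar z}$-primitive of $z^{j}e^{-\pi|z|^2}$, the complex Green's theorem turns each condition into a boundary integral, giving $\oint_{\partial\Omega}z^{k}\phi(z)\,dz=0$ for all $k\ge 0$, where $\phi(z)=e^{-\pi z\bar z}$ is viewed as (smooth) data on the curve $\partial\Omega$. I then form the Cauchy transform of $\phi$ along $\partial\Omega$ and use the Plemelj jump relation $\phi=\Phi_i-\Phi_e$, with $\Phi_i,\Phi_e$ holomorphic inside and outside $\Omega$ and $\Phi_e(\infty)=0$. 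Since $\oint z^k\Phi_i\,dz=0$ automatically, the vanishing moments say $\oint z^k\Phi_e\,dz=0$ for all $k\ge0$; deforming to a large circle (legitimate because $\mathbb{C}\setminus\overline\Omega$ is connected, by simple connectivity) forces every Laurent coefficient of $\Phi_e$ at infinity to vanish, so $\Phi_e\equiv 0$. Hence $e^{-\pi|z|^2}=\Phi_i(z)$ on $\partial\Omega$ for a function $\Phi_i$ holomorphic in $\Omega$. Its boundary values lie in $(0,1]\subset\mathbb{R}_{>0}$, so by the argument principle $\Phi_i$ has no zeros, and on the simply connected $\Omega$ the single-valued branch $\psi=\log\Phi_i$ is holomorphic with purely real boundary values $-\pi|z|^2$. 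Then $\operatorname{Im}\psi$ is harmonic and vanishes on $\partial\Omega$, hence $\operatorname{Im}\psi\equiv0$, $\psi$ is constant, and $|z|^2=\text{const}$ on $\partial\Omega$; thus $\partial\Omega$ is a circle centred at $0$ and, $\Omega$ being simply connected and of finite measure, $\Omega$ is the corresponding disc.

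For general $n$ I would reduce to the same complex-analytic core by writing $z^{n}e^{-\pi|z|^2}=\pi^{-n}(-1)^{n}\partial_{\bar z}^{\,n}e^{-\pi|z|^2}$ and integrating by parts $n$ times in the eigen-relation $\int_\Omega z^{n}e^{\pi w\bar z-\pi|z|^2}\,dA=\lambda w^{n}$: the bulk term reproduces $w^{n}\int_\Omega e^{\pi w\bar z-\pi|z|^2}\,dA$, while the accumulated boundary terms assemble into contour integrals of the form $\oint_{\partial\Omega}z^{p}e^{\pi(w-z)\bar z}\,dz$. Feeding the moment conditions into these and running the same Plemelj-plus-maximum-principle argument once more yields $|z|^2=\text{const}$ on $\partial\Omega$, with the centre again pinned at $0$ because it is the Euclidean radius that is forced to be constant.

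\textbf{Main obstacle.} The conceptual heart---converting ``all holomorphic moments vanish'' into ``a holomorphic function with real boundary data is constant''---is clean, so the real difficulties are (i) the regularity of $\partial\Omega$ needed to apply Green's theorem and the Plemelj formulae: a priori $\chi_\Omega\in L^1$ only gives finite measure, and one must argue, from the real-analyticity of $u=\mathcal{V}_\varphi h_n$ together with the relation $P(\chi_\Omega u)=\lambda u$ (where $P$ is the orthogonal projection onto the range of $\mathcal{V}_\varphi$), that $\partial\Omega$ is rectifiable, indeed analytic, so that the boundary objects make sense; and (ii) the bookkeeping of the $n$ boundary contributions in the general case, which must be organised so that the same holomorphic-extension argument still applies. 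I expect (i) to be the genuinely delicate point, the rest being a careful but essentially routine elaboration of the $n=0$ computation.
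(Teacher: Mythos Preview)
Your approach via Green's theorem, Cauchy transforms and Plemelj jump relations is genuinely different from the paper's. After reaching the same moment conditions $\int_\Omega |z|^{2m}\bar z^{\,k} e^{-\pi|z|^2}\,dA=\lambda\delta_{k,0}$, the paper invokes a potential-theoretic lemma (its Proposition~\ref{Th:LocMon}) in the spirit of Zalcman: one sums the conditions against $\bar w^{-k}$ to obtain $\int_\Omega |z|^{2m}\frac{\bar z}{\bar z-\bar w}\,d\mu(z)=0$ for $|w|$ large, extends this by continuity of the locally integrable kernel to all of $\overline{\Omega^c}$, shows $0\in\Omega$ by a limiting contradiction, and then takes the largest disc $D_R\subset\Omega$ and a point $w_0\in\partial D_R\cap\partial\Omega$ to conclude---from positivity of the integrand $|z|^{2m}\tfrac{|z|^2-\operatorname{Re}(\bar z w_0)}{|z-w_0|^2}$ on $\Omega\setminus D_R$---that $\Omega\setminus D_R$ has measure zero. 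No boundary regularity is used anywhere; only boundedness of $\Omega$ enters.

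This is exactly where your plan has a real gap. Your use of Green's theorem and the Plemelj formulae presupposes a rectifiable (indeed, reasonably smooth) boundary, and your suggestion that one might deduce this a posteriori from $P(\chi_\Omega u)=\lambda u$ is not substantiated---the paper's argument shows that no such bootstrap is needed. Your general-$n$ reduction via $n$-fold integration by parts is also more tangled than necessary: since $\partial_{\bar z}\bigl(z^{k-1}g(|z|^2)\bigr)=z^{k}|z|^{2n}e^{-\pi|z|^2}$ with $g(r)=\int_0^r s^{n}e^{-\pi s}\,ds$, a single application of Green already gives $\oint_{\partial\Omega}z^{j}g(|z|^2)\,dz=0$ for all $j\ge 0$, and then your Plemelj-plus-log argument applies verbatim because $g$ is strictly increasing and positive on $(0,\infty)$. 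So your method does go through for all $n$ once the boundary is rectifiable; what the paper's largest-disc argument buys is that the conclusion holds for arbitrary bounded simply connected $\Omega$, with no regularity hypothesis at all.
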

Let us briefly discuss some motivations for our studies and consequences of our result.
Hermite functions have been proposed as  modulating pulses 
in pulse-shape modulation for ultra-wideband (UWB) communication, mainly due to their maximal joint concentration  in  time and frequency, cf.~\cite{sica07,pabywazh03,ghmihako02} and references therein. The receiver of communication system often applies a filter with the modulation pulses as eigenfunctions corresponding to large eigenvalues, in order to suppress random noise that accumulates during transmission. In this situation,  Theorem~\ref{Th:main1} shows that the filter must be designed with a circular  localization domain. 
Furthermore, if the   filter on  the receiver side is known or designed to have  one single Hermite function as an eigenfunction, it is possible to  guarantee the location of the time-frequency  plane that the filter is sensing. This is particularly important in UWB communication, where  the permitted spectrum is officially prescribed, cp.~\cite{FCC02}.

This last remark also hints at an additional possible application, which is system identification. Identification of linear time-variant  systems is a notoriously difficult task in general, cp.~\cite{ka63,be69,pfwa06}. While a linear time-invariant system is straight-forwardly identified by sending an impulse to the system and retrieving the impulse response, \cite[Sec.~4.2]{gr01}, no similar method exists for general linear time-variant systems. By our result, and for a system that is known to be a localization operator of the form $H_\Omega$ for some time-frequency region $\Omega$, we may send any Hermite function to the system and judge from the response, whether $\Omega$ can be a disk. In the positive case, one can then evaluate the size of the disk by subsequently sending additional Hermite functions and evaluating the resulting scaling factors. Obviously, this approach should be extended to other shapes and its feasibility will be the topic of further research.

In analogy to Theorem~\ref{Th:main1}, we will also consider an inverse problem for wavelet localization
operators. Here, we show that the domain of localization of the localization
operators investigated by Daubechies and Paul~\cite{dapa88} is a
pseudohyperbolic disc in the upper half plane whenever one of the operator's
eigenfunctions is the Fourier transform of a Laguerre function. We will
essentially use methods from complex analysis and our techniques are
strongly influenced by the ideas contained in~\cite{an00} and~\cite{se91}.

This paper is organized as follows. Section~\ref{Sec:EF_Loc} collects some properties of the eigenfunctions of localization operators with respect to radially weighted measures and Section~\ref{Se:LoDoMo} deduces the geometry of localization domains under the assumption of  orthogonality of any  single monomial to almost all monomials. The corresponding inverse problem for Gabor localization is studied in  Section~\ref{Se:InvProbGab} and Section~\ref{SE:WavLoc} is devoted to the investigation of the inverse problem for wavelet localization.

\section{Double orthogonality and reproducing kernel Hilbert spaces}\label{Sec:DO_RPH}
This section is devoted to  the properties of complex monomials, namely their double orthogonality with respect to any  radially weighted measures and the consequences of this property. 
\subsection{Eigenfunctions of Localization Operators}\label{Sec:EF_Loc}

Let $D_{a}$ denote a disk of radius $a$, $0<a\leq \infty $,  $d\mu
(z)=\mu (\left\vert z\right\vert )dz$  a radially weighted measure and $dz$  Lebesgue measure on $\mathbb{C}$.\newline In the sequel,
we will denote by $\mathcal{H}_{a}=L^{2}(D_{a},d\mu (z))$ the Hilbert space
of analytic functions $F$ on $\mathbb{C}$, such that 
\begin{equation*}
\Vert F\Vert _{\mathcal{H}}=\int_{D_{a}}|F(z)|^{2}d\mu (z)
\end{equation*}%
is finite. 
In  Proposition~\ref{Prop1} we collect the most important facts about the
\textquotedblleft direct problem\textquotedblright\ studied in~\cite{da88} 
\cite{dapa88} when transfered to  the complex domain. This point of view is essentially
contained in~\cite{se91}, but we have observed that both problems can be
understood as special cases of a more general formulation with general
radial measures on complex domains. This viewpoint is later reflected in our
derivation of the results about the inverse problems.

\begin{proposition}
\label{Prop1}Consider all radial
measures on disks $D_{R}$ with radius $R$ in the complex
plane, i.e. the measures constituted by the weighted measure $d\mu (z)=\mu
(\left\vert z\right\vert )dz$, defined on $D_{R}$, whose weight $\mu
(\left\vert z\right\vert )$ depends only on $r=|z|$.  The following statements are true:

\begin{description}
\item[(a)] The monomials are orthogonal on any disk $D_{R}$ centered at zero with radius $R$
in the complex plane and with respect to all concentric measures.\\
Consequently, the monomials are also orthogonal on any annulus centered at zero.

\item[(b)]   Assume $%
0<c_{n,a}<\infty $ for all moments $c_{n,a}$ of $\mu (\left\vert
z\right\vert )dz$. Then, the normalized monomials $e_{n,a}= z^{n}/\sqrt{(c_{2n+1,a})}$ constitute an orthonormal basis for $\mathcal{H}_{a}$.

\item[(c)] If, in addition, $\sum_{n\geq 0}(c_{2n+1,a})^{-1}|z|^{2n}$ is
finite for all $z\in D_{a}$, then $\mathcal{H}_{a}$ is a reproducing kernel
Hilbert space with reproducing kernel 
\begin{equation*}
K(\overline{z},w)=\sum_{n\geq 0}(c_{2n+1,a})^{-1}\overline{z}^{n}w
^{n}.
\end{equation*}

\item[(d)] The functions $F(z)=e_{n,a}$ are eigenfunctions of the problem%
\begin{equation}
\int_{D_{R}}F(z)K(\overline{z},w)d\mu (z)=\lambda
F(w).  \label{eigenvalue}
\end{equation}
\end{description}
\end{proposition}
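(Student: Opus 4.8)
The plan is to prove the four statements in sequence, each building on its predecessors. For part (a), I would pass to polar coordinates $z = re^{i\theta}$, so that $d\mu(z) = \mu(r)\,r\,dr\,d\theta$, and compute the inner product of two monomials directly: $\langle z^n, z^m\rangle_{\mathcal{H}} = \int_0^R r^{n+m+1}\mu(r)\,dr \int_0^{2\pi} e^{i(n-m)\theta}\,d\theta$. The angular integral vanishes unless $n=m$, which yields orthogonality on $D_R$; since the radial weight $\mu(r)$ plays no role in this cancellation, the identical computation over a radial range $[R_1,R_2]$ gives orthogonality on any concentric annulus. The diagonal terms identify $\|z^n\|_{\mathcal{H}}^2 = c_{2n+1,a}$, the relevant radial moment, which fixes the normalization used throughout.

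For part (b), orthonormality of $e_{n,a} = z^n/\sqrt{c_{2n+1,a}}$ is immediate from part (a) together with the hypothesis $0 < c_{n,a} < \infty$. The substance is completeness, which amounts to density of polynomials in $\mathcal{H}_a$. Given $F\in\mathcal{H}_a$ with Taylor expansion $F(z) = \sum_n a_n z^n$, the partial sums converge uniformly on compact subsets. To compute $\langle F, z^n\rangle$ I would first integrate $F(z)\overline{z^n}$ over a smaller disk $D_\rho$ with $\rho < a$, where uniform convergence permits term-by-term integration, giving $\int_{D_\rho} F\,\overline{z^n}\,d\mu = a_n c_{2n+1,\rho}$; since $F\,\overline{z^n}\in L^1(D_a,d\mu)$ by Cauchy--Schwarz, I then pass to the limit $\rho\to a$ by dominated convergence and $c_{2n+1,\rho}\to c_{2n+1,a}$ by monotone convergence. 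This establishes $\langle F, e_{n,a}\rangle = a_n\sqrt{c_{2n+1,a}}$, so any $F$ orthogonal to every monomial has all Taylor coefficients zero and hence vanishes; the orthonormal system is therefore complete.

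Part (c) is the standard construction of a reproducing kernel from an orthonormal basis. The candidate is $K(\overline{z},w) = \sum_n \overline{e_{n,a}(z)}\,e_{n,a}(w) = \sum_n (c_{2n+1,a})^{-1}\overline{z}^{\,n}w^n$, and the hypothesis $\sum_n (c_{2n+1,a})^{-1}|z|^{2n} < \infty$ guarantees that $K(\overline{\,\cdot\,},w)\in\mathcal{H}_a$ for each fixed $w$, its squared norm being $K(\overline{w},w)$. Expanding any $F\in\mathcal{H}_a$ in the basis $\{e_{n,a}\}$ from part (b) then yields the reproducing identity $\langle F, K(\overline{\,\cdot\,},w)\rangle = F(w)$, so point evaluations are bounded and $K$ is the reproducing kernel.

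Finally, for part (d) I would substitute $F = e_{n,a}$ into the integral over $D_R$ and expand the kernel as in part (c). Interchanging the sum with the integral, justified by the convergence hypothesis of part (c) (the double series converges absolutely on $D_R$ for $w$ in a compact set), and invoking the orthogonality of monomials on $D_R$ from part (a), every term but the $n$-th drops out, leaving
\[
\int_{D_R} e_{n,a}(z)\,K(\overline{z},w)\,d\mu(z) = \frac{c_{2n+1,R}}{c_{2n+1,a}}\,e_{n,a}(w).
\]
Thus each $e_{n,a}$ is an eigenfunction with eigenvalue $\lambda_n = c_{2n+1,R}/c_{2n+1,a}$. The only genuinely delicate point in the whole argument is the completeness in part (b) --- the norm convergence of the Taylor expansion --- where the limiting procedure $D_\rho\to D_a$ and the finiteness of the moments are essential; the remaining steps are orthogonality bookkeeping and routine interchanges of summation and integration.
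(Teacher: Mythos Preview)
Your proposal is correct and follows essentially the same approach as the paper: polar coordinates for (a), Taylor expansion with a limiting argument $D_\rho \to D_a$ for completeness in (b), the standard kernel-from-basis construction for (c), and orthogonality of monomials on $D_R$ for (d). The only cosmetic differences are that the paper phrases (c) as boundedness of point evaluations rather than verifying the reproducing identity directly, and phrases (d) via the operator $PU$ (multiplication by $\chi_{D_R}$ followed by projection) rather than your explicit computation of $\lambda_n = c_{2n+1,R}/c_{2n+1,a}$; your version is arguably cleaner and more informative.
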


\begin{proof}
(a) Orthogonality can directly be seen by 
\begin{equation}
\int_{D_{R}}z^{n}\overline{z}^{m}d\mu
(z)=\int_{0}^{R}r^{(n+m+1)}\int_{0}^{2\pi }e^{i\left( n-m\right) \theta
}d\theta \mu (r)dr=c_{2n+1,R}\delta _{n,m},  \label{Eq:Orth_Monom}
\end{equation}%
with $c_{n,R}=2\pi \int_{0}^{R}r^{n}\mu (r)dr$. 

(b) Consider a domain $D_{a}$, $R<a\leq \infty $ such that $%
\lim_{r\rightarrow a}d\mu (r)=0$.\ Since the power series $\sum_{n\geq
0}a_{n}z^{n}$ of an analytic function $F$ on $\mathbb{C}$ converges
uniformly on every $D_{R}$, we may interchange integral and summation in the
following equations: suppose that $\langle F,e_{n,a}\rangle =0$ for all $%
n\in \mathbb{Z}$, then 
\begin{align*}
0=& \frac{1}{\sqrt{c_{2m+1,a}}}\lim_{R\rightarrow a}\int_{D_{R}}\sum_{n\geq
0}a_{n}z^{n}\overline{z}^{m}\mu (|z|)dz \\
=& \frac{1}{\sqrt{c_{2m+1,a}}}\lim_{R\rightarrow a}\sum_{n\geq
0}a_{n}\int_{D_{R}}z^{n}\overline{z}^{m}\mu (|z|)dz \\
=& \frac{1}{\sqrt{c_{2m+1,a}}}\lim_{R\rightarrow a}a_{m}c_{2m+1,R}
\end{align*}%
which implies $a_{m}=0$ for all $m$ and hence $F\equiv 0$, which proves
completeness of the functions $\{e_{n,a}\}$ in\ $\mathcal{H}_{a}$.\newline
(c) We need to show that point evaluations of $F\in \mathcal{H}_{a}$ are
bounded. Expanding $F$ in terms of $\{e_{n,a}\}$, we observe that 
\begin{equation*}
|F(z)|=|\sum_{n\geq 0}\langle F,e_{n,a}\rangle \frac{z^{n}}{\sqrt{c_{2n+1,a}}%
}|\leq \Vert F\Vert _{\mathcal{H}_{a}}\cdot (\sum_{n\geq 0}\frac{1}{%
c_{2n+1,a}}|z|^{2n})^{\frac{1}{2}}.
\end{equation*}%
Thus, by the assumption on the growth of the moments, $\mathcal{H}_{a}$ is a
reproducing kernel Hilbert space. \newline
(d) Write $U$ for the operator which multiplies a function $F\in \mathcal{H}$
by the characteristic function of the circle $D_{R}$ and $P$ for the
orthogonal projection onto $\mathcal{H}_{a}$, given by the reproducing
kernel. Since $P(\frac{z^{n}}{\sqrt{c_{2n+1,a}}})=\frac{z^{n}}{\sqrt{%
c_{2n+1,a}}}$, we note that 
\begin{equation*}
0=\int_{D_{R}}e_{n,a}\overline{e_{n,a}}\mu (\left\vert z\right\vert
)dz=\int_{D_{a}}e_{n,a}PU(\overline{e_{n,a}})d\mu (z).
\end{equation*}%
and completeness of $e_{n,a}$ implies that $PUe_{n,a}=e_{n,a}$. Denoting by $%
K(\overline{z},w)$ the reproducing kernel of $\mathcal{H}_{a}$, the
functions $F(z)=e_{n,a}$ are eigenfunctions of problem \eqref{eigenvalue}.
\end{proof}

Using appropriate unitary operators (the so-called Bargmann and Bergman
transform, to be defined later in this paper), the solution to the general
problem just described can be shown to be equivalent to the solution of the
\textquotedblleft direct\textquotedblright\ problems considered in~\cite%
{da88} and~\cite{dapa88}. Indeed, the $d\mu (z)=e^{-\pi |z|^{2}}dz$ case can
be translated to the Gabor localization problem studied by Daubechies and
the case $d\mu (z)=(1-|z|^{2})^{\alpha }dz$ to the wavelet localization
studied by Daubechies and Paul. Details will be given in Section~\ref{Se:InvProbGab} and Section~\ref{SE:WavLoc}.

\subsection{The localization domain of monomials}\label{Se:LoDoMo}

We now turn to the general problem, given by \eqref{eigenvalue}. The
following, central proposition states that orthogonality of any monomial to almost
all other monomials with respect to a bounded, simply connected  domain $\Omega\subset\mathbb{C}$ forces $\Omega$
to be a disk centered at zero. We also consider more general domains as described in Theorem~\ref{Th:main1}(b). Note that we identify $\mathbb{R}^2$ with $\mathbb{C}$ for the geometric description. The proof is based on an idea of Zalcman~\cite{za87} and is essentially similar to the proof given in~\cite{an00}, but in
a more general setting, namely generalizing from area measure to general concentric measures. To adapt the original argument, we rely on Proposition~\ref{Prop1}.

\begin{proposition}\label{Th:LocMon}
Let $d\mu (z)$ be a positive, concentric
measure on $D_a\subseteq\mathbb{C}$ and consider a simply connected set  $\Omega\subset D_a$. 
Assume,  for some $m$ and $k\geq 0$  that 
\begin{equation}
\int_{\Omega }\left\vert z\right\vert ^{2m}\overline{z}^{k}d\mu (z)=\lambda
\delta _{k,0}\text{.}  \label{eq:exp_zero}
\end{equation}%
Then $\Omega$  must be a disk centered at zero. 
\end{proposition}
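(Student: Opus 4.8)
The plan is to show that the hypothesis \eqref{eq:exp_zero} forces the complex moments of $\chi_\Omega\,d\mu$ to match exactly those of a disk $D_r$ centered at zero, and then invoke the rigidity of the moment problem to conclude $\Omega = D_r$. The condition \eqref{eq:exp_zero} says that $\int_\Omega |z|^{2m}\overline{z}^k\,d\mu(z)=0$ for all $k\geq 1$, while for $k=0$ it equals some $\lambda>0$. Writing $|z|^{2m}=z^m\overline{z}^m$, this is a statement about the moments $\int_\Omega z^m \overline{z}^{m+k}\,d\mu(z)$. The key idea, following Zalcman and the treatment in~\cite{an00}, is to convert these vanishing moment conditions into an analytic identity on the boundary $\partial\Omega$ using the complex Green's / Cauchy--Pompeiu formula, thereby encoding the geometry of $\Omega$ into a holomorphic function that the hypothesis forces to be constant.

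First I would introduce the radial antiderivative. Since $d\mu(z)=\mu(|z|)\,dz$ is radial, I would set $\Phi(r)=\int_0^r s\,\mu(s)\,ds$ (or a closely related primitive of $r^{2m+1}\mu(r)$), so that the area-measure conditions become expressible via a function of $|z|^2$ alone. Concretely, I would look for a function $G$ with $\partial_{\bar z} G = |z|^{2m}\,\mu(|z|)$, which exists because the right-hand side is radial; explicitly one can take $G(z)=z\cdot g(|z|^2)$ for a suitable $g$. Then, applying Green's theorem (the $\bar\partial$-version of Stokes),
\begin{equation*}
\int_\Omega |z|^{2m}\overline{z}^k\,\mu(|z|)\,dz
= \int_\Omega \overline{z}^k\,\partial_{\bar z}G\,dz
= \frac{1}{2i}\oint_{\partial\Omega} \overline{z}^k\,G(z)\,dz,
\end{equation*}
converting the bulk vanishing conditions for all $k\geq 1$ into the statement that $\oint_{\partial\Omega}\overline{z}^k G(z)\,dz=0$ for every $k\geq 1$.

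Next I would interpret these boundary conditions analytically. The vanishing of $\oint_{\partial\Omega}\overline{z}^k G\,dz$ for all $k\geq 1$, together with the fact that $G(z)=z\,g(|z|^2)$ makes $G$ a known explicit function on the curve, means that the function $\overline{z}\,g(|z|^2)$ (or its relevant part) extends holomorphically from $\partial\Omega$ into $\Omega$ — this is where $\Omega$ being bounded and \emph{simply connected} is essential, since it guarantees that a function whose anti-holomorphic moments all vanish is the boundary value of a function holomorphic in the interior. By the radial structure of $g$, on the circle $|z|=R$ the expression $\overline{z}\,g(|z|^2)=R^2 g(R^2)/z$ is genuinely holomorphic (a multiple of $1/z$), which is exactly the Schwarz-function characterization of a disk: a bounded simply connected domain whose Schwarz function $S(z)$ (satisfying $\overline{z}=S(z)$ on $\partial\Omega$) is meromorphic with a single simple pole at $0$ must be a disk centered at $0$.

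The main obstacle, and the step I would spend the most care on, is the passage from ``all anti-holomorphic moments of $G$ over $\partial\Omega$ vanish'' to ``$\Omega$ is a disk.'' This is precisely the rigidity content of the Schwarz-function / quadrature-domain argument of Zalcman, and the delicate points are (i) ensuring the primitive $G$ has the right boundary behavior so that the contour integral genuinely encodes the Schwarz function, (ii) using the single nonzero moment ($k=0$, value $\lambda$) to fix the radius and rule out degenerate solutions, and (iii) justifying that simple connectivity upgrades the moment vanishing to a true holomorphic extension rather than a multivalued or higher-genus phenomenon. I expect the radiality of $\mu$ to be exactly what makes $G$ factor as $z\,g(|z|^2)$ and hence what forces the Schwarz function to have the single-pole form characteristic of a disk; the remainder is then an appeal to the classical uniqueness theorem for such quadrature-type identities.
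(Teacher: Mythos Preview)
Your route via Green's theorem and the Schwarz function is genuinely different from the paper's, but it has a real technical error and an unresolved core step.

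First, the Green step as written is wrong: in the identity $\int_\Omega \partial_{\bar z}F\,dA=\frac{1}{2i}\oint_{\partial\Omega}F\,dz$ you need $F=\overline{z}^kG$, and $\partial_{\bar z}(\overline{z}^kG)=k\overline{z}^{k-1}G+\overline{z}^k\partial_{\bar z}G$, not $\overline{z}^k\partial_{\bar z}G$. You are treating $\overline{z}^k$ as though $\partial_{\bar z}\overline{z}^k=0$, which is false (it is $\partial_z\overline{z}^k$ that vanishes). This is fixable by switching to the $d\bar z$-version of Stokes and choosing $G=z\,g(|z|^2)$ with $\partial_zG=|z|^{2m}\mu(|z|)$, but you should be aware the computation as stated does not stand.

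Second, and more seriously, the passage from ``the boundary moments of $\overline{z}^kG$ vanish for $k\ge1$'' to a Schwarz-function statement is not carried out, and it is not standard in the weighted setting. The clean Schwarz-function characterization of disks applies to the bare moments $\int_\Omega\bar z^k\,dA$; with the weight $|z|^{2m}\mu(|z|)$ baked into $G$, what you get on $\partial\Omega$ is information about $z\,g(|z|^2)$, not directly about $\bar z$. You would need to argue that $g$ is strictly monotone (so that $|z|^2$ can be recovered from $g(|z|^2)$) and then disentangle $\bar z$ from the composite, which is nontrivial and is exactly the step you flag as ``the main obstacle'' without resolving.

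The paper sidesteps all of this. Instead of boundary integrals, it forms the Cauchy-type transform
\[
w\longmapsto\int_\Omega|z|^{2m}\frac{1}{\overline z-\overline w}\,d\mu(z),
\]
expands in a geometric series for $|w|$ large, and reads off from the hypothesis that this equals $\lambda/\overline{w}$ there; analytic continuation extends the identity to $\overline{\Omega^c}$. A short contradiction shows $0\in\Omega$. Then, letting $D_R$ be the largest disk centered at $0$ contained in $\Omega$ and using that the same identity holds for $D_R$ (Proposition~\ref{Prop1}(a)), one subtracts and evaluates at a point $w_0\in\partial D_R\cap\partial\Omega$ to obtain
\[
\int_{\Omega\setminus D_R}|z|^{2m}\,\frac{|z|^2-\operatorname{Re}(\overline{z}w_0)}{|z-w_0|^2}\,d\mu(z)=0,
\]
whose integrand is nonnegative on $\Omega\setminus D_R$ since $|w_0|=R\le|z|$ there. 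Hence $\Omega\setminus D_R$ is null and $\Omega=D_R$. This positivity argument is the missing idea in your sketch; it replaces the Schwarz-function rigidity you were reaching for with an elementary sign computation that works uniformly for all radial weights.
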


\begin{proof}
Since 
\[
\frac{\overline{zw}}{\overline{z}-\overline{w}}=-\frac{\overline{z}}{1-\frac{%
\overline{z}}{\overline{w}}}=-\sum_{n=1}^{\infty }\frac{\overline{z}^{n}}{%
\overline{w}^{n-1}}\text{,}
\]%
we have for every $z\in \Omega $ and $w$ such that $\left\vert w\right\vert
>\sup \{\left\vert z\right\vert ;z\in \Omega \}$, the following expansion:%
\[
\left\vert z\right\vert ^{2m}\frac{\overline{zw}}{\overline{z}-\overline{w}}%
=-\left\vert z\right\vert ^{2m}\left( \overline{z}+\frac{\overline{z}^{2}}{%
\overline{w}}+\frac{\overline{z}^{3}}{\overline{w}^{2}}+...\right) .
\]%
Integrating term wise and using \eqref{eq:exp_zero} yields%
\begin{equation}
\int_{\Omega }\left\vert z\right\vert ^{2m}\frac{\overline{zw}}{\overline{z}-%
\overline{w}}d\mu (z)=0,
\end{equation}%
hence 
\begin{align}
\int_{\Omega }\left\vert z\right\vert ^{2m}\frac{\left\vert z\right\vert
^{2}-\overline{z}w}{\left\vert z-w\right\vert ^{2}}d\mu (\left\vert
z\right\vert )=& \int_{\Omega }\left\vert z\right\vert ^{2m}\frac{\overline{z%
}}{\overline{z}-\overline{w}}d\mu (z) \label{eq:zeroform1} \\
=& \frac{1}{\overline{w}}\int_{\Omega }\left\vert z\right\vert ^{2m}\frac{%
\overline{zw}}{\overline{z}-\overline{w}}d\mu (z)=0.\label{formula}
\end{align}%
The left expression in \eqref{eq:zeroform1} is continuous as a function of $%
\overline{w}$ since the integrand is locally integrable in $z$. Therefore, %
\eqref{formula} holds on $\overline{\Omega ^{c}\text{.}}$ \\
We next show that $0$ is  inside  $\Omega $. Begin by observing that, for $\left\vert
w\right\vert >\sup \{\left\vert z\right\vert ;z\in \Omega \}$, we can expand
and integrate term wise so that 
\begin{equation}
\int_{\Omega }\left\vert z\right\vert ^{2m}\frac{1}{\overline{z}-\overline{w}%
}d\mu (z)=\frac{1}{\overline{w}}\int_{\Omega }\left\vert z\right\vert ^{2m}%
\frac{\overline{w}}{\overline{z}-\overline{w}}d\mu (z)=\frac{1}{\overline{w}}%
\lambda .  \label{formula1}
\end{equation}%
Let $C>\sup \{\left\vert z\right\vert ;z\in \Omega \}$. We let  $d(w,\Omega )$ denote  for the Euclidean distance between $w$ and $%
\Omega $, i.e. $d(w,\Omega ) = \inf_{w'\in\Omega}|w-w'|$. Then the following
pointwise estimate in $\overline{w}\in \Omega ^{c}$ holds:%
\[
\int_{\Omega }\left\vert \left\vert z\right\vert ^{2m}\frac{1}{\overline{z}-%
\overline{w}}\right\vert d\mu (z)\leq \frac{C^{2m}}{d(w,\Omega )}.
\]%
 This allows to extend (\ref{formula1}) by analytic continuation to 
$\overline{\Omega ^{c}}$. \\
Suppose now that $0\in \overline{\Omega ^{c}\text{%
.}}$ Then we can find a sequence of points $\{w_{n}\}$ contained in $\Omega
^{c}$ such that $w_{n}\rightarrow 0$. This would give%
\[
\lim_{n\rightarrow \infty }\int_{\Omega }\left\vert z\right\vert ^{2m}\frac{1%
}{\overline{z}-\overline{w_{n}}}d\mu (z)=\lim_{n\rightarrow \infty }\frac{1}{%
\overline{w_{n}}}\lambda _{m}=\infty \text{.}
\]%
On the other hand, because of the continuity of the left expression in $%
\overline{w}$, 
\[
\lim_{n\rightarrow \infty }\int_{\Omega }\left\vert z\right\vert ^{2m}\frac{1%
}{\overline{z}-\overline{w_{n}}}d\mu (z)=\int_{\Omega }\left\vert
z\right\vert ^{2m}\frac{1}{\overline{z}}d\mu (z),
\]%
and the integral on the right is bounded for every $m\geq 0$, since we are
assuming that $0\notin \Omega .$ This is a contradiction and we must have $%
0\in \Omega $. \\
Finally, we can consider $D_{R}$, the largest disc centered at
zero and contained in $\Omega $. Using Proposition~\ref{Prop1}(a), we can repeat the steps leading to (\ref{formula1}) with $%
D_{R}$ instead of $\Omega $. Pick a point $w_{0}\in \partial D_{R}\cap
\partial \Omega $. Then
\[
\int_{\Omega \backslash D_{R}}\left\vert z\right\vert ^{2m}\frac{\left\vert
z\right\vert ^{2}-\operatorname{Re}\overline{z}w_{0}}{\left\vert z-w_{0}\right\vert
^{2}}d\mu (z)=0.
\]%
Since, for $z\in \Omega \backslash D_{R}$,  $\left\vert \operatorname{Re}\overline{z}w_{0}\right\vert \leq \left\vert
z\right\vert \left\vert w_{0}\right\vert \leq |z|^{2}$, the integrand is
positive on $\Omega \backslash D_{R}$. This forces $\Omega \backslash D_{R}$
to be of area measure zero, which implies $\Omega =D_{R}$.

\end{proof}
\begin{figure}[h]
	\centerline{\includegraphics[scale = .3]{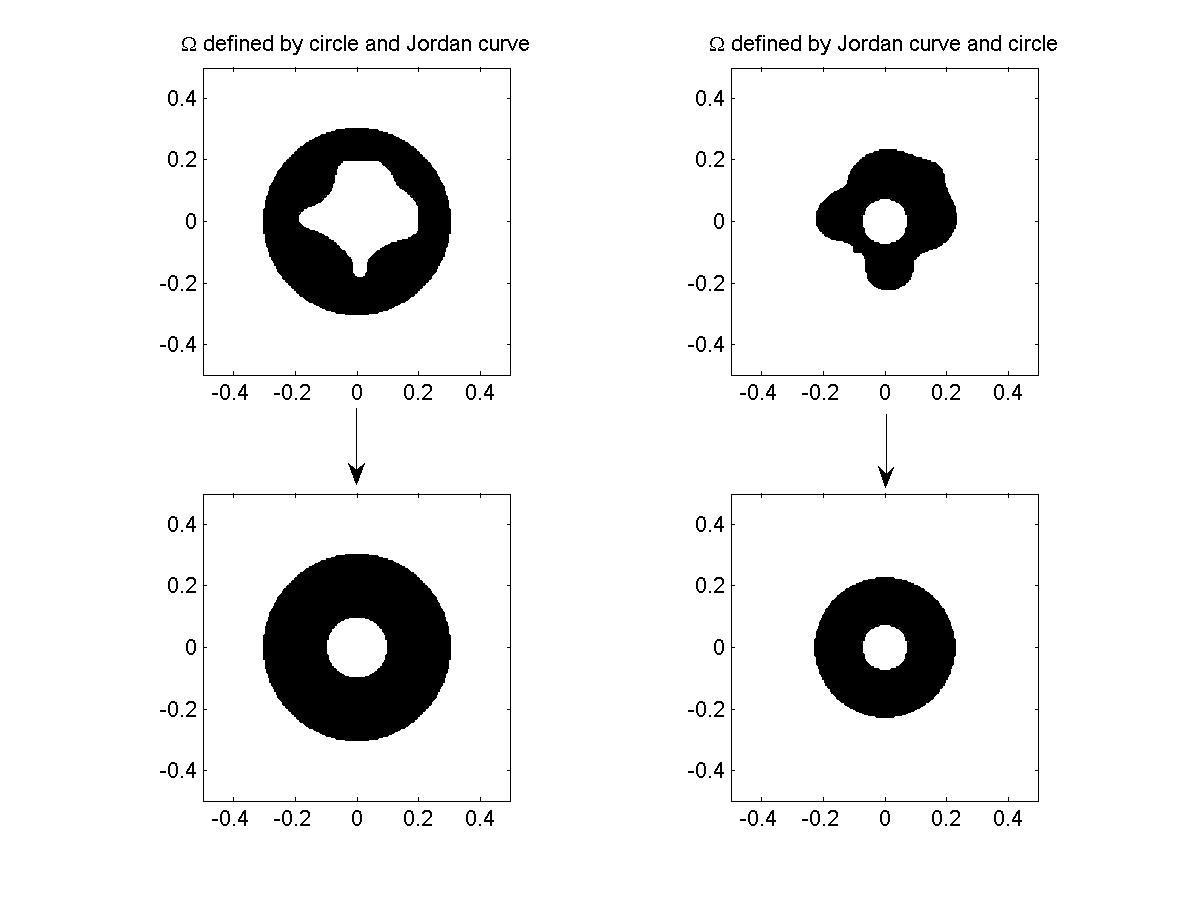}}
	\caption{The situation described in Corollary~\ref{Cor_JordanCirc}. $\Omega$ is an annulus. }
	\label{Fig2}
\end{figure}
For the next statement, we consider a more general situation. Let  $\gamma_j$, $j = 1,\ldots , n$ be a family of non-intersecting Jordan curves with interiors $I^{\gamma_j}$ such that $I^{\gamma_{j-1}}\subset I^{\gamma_j}$ for all $j>1$. \\
If $n$ is even, set $K = \frac{n}{2}$ and  let $\Omega_k = I^{\gamma_{2k}}\setminus I^{\gamma_{2k-1}}$ for $k = 1,\ldots, K$.\\  
If $n$ is odd, set $K = \frac{n+1}{2}$ and let $\Omega_1 = I^{\gamma_1}$ and $\Omega_k = I^{\gamma_{2k-1}}\setminus I^{\gamma_{2k-2}}$ for $k = 2,\ldots, K$.  For the situation just described, we set $\Omega =\bigcup_{k=1}^{K} \Omega_k$ and consider the corresponding localization operator. The next corollary shows that under the double orthogonality condition \eqref{eq:exp_zero}, all curves must contain $0$ in their interior. Furthermore, for $n=2$, if one of the two curves is a circle, $\Omega$ must be a annulus.
\begin{Cor}\label{Cor_JordanCirc}
\begin{itemize}
\item[(a)]
Let \eqref{eq:exp_zero} hold for  $\Omega =\bigcup_{k=1}^{K} \Omega_k$  defined by a family of nested Jordan curves as described above. Then all curves $\gamma_j$ must contain zero. 
\item[(b)] If $n = 2$ and $\gamma_j$ is a circle centered at $0$ for $j = 1$ or $j = 2$, then $\Omega$ is an annulus, see Figure~\ref{Fig2}. 
\end{itemize}
\end{Cor}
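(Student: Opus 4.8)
The plan is to read \eqref{eq:exp_zero} as a quadrature identity and then locate the origin by a Cauchy-transform argument for part~(a), while reducing part~(b) directly to Proposition~\ref{Th:LocMon}. For the common setup I write \(d\nu(z)=|z|^{2m}\,d\mu(z)\), a positive measure on \(\Omega\) with \(\lambda=\nu(\Omega)>0\); then \eqref{eq:exp_zero} says \(\int_\Omega \overline{z}^{\,k}\,d\nu=\lambda\,\delta_{k,0}\), and conjugating gives \(\int_\Omega z^{k}\,d\nu=\lambda\,\delta_{k,0}\) as well, so \(\nu\) has the same complex moments as \(\lambda\delta_0\). I form
\[
C(w)=\int_\Omega\frac{d\nu(z)}{\overline{z}-\overline{w}},
\]
which, exactly as in the proof of Proposition~\ref{Th:LocMon}, is continuous on \(\mathbb{C}\), anti-holomorphic off \(\overline{\Omega}\), and equals \(\lambda/\overline{w}\) on the unbounded component \(U_0\) of \(\mathbb{C}\setminus\overline{\Omega}\) by the expansion leading to \eqref{formula1}.

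For part~(a), if \(0\in U_0\) then \(C\) is finite at \(0\) while \(\lambda/\overline{w}\) is not, a contradiction; this is the proof of Proposition~\ref{Th:LocMon} verbatim and it shows the outermost curve \(\gamma_n\) encloses \(0\). To reach the inner curves I pass to the logarithmic potential
\[
p(a)=\int_\Omega\log|z-a|\,d\nu(z)-\lambda\log|a|,
\]
which vanishes on \(U_0\) by the same large-\(|a|\) expansion, is harmonic on \(\mathbb{C}\setminus(\overline{\Omega}\cup\{0\})\), is subharmonic on \(\Omega\) since \(\Delta p=2\pi\,d\nu\ge 0\) there, and carries the negative point source \(-2\pi\lambda\delta_0\) at the origin, so \(p(a)\to+\infty\) as \(a\to0\) whenever \(0\notin\overline{\Omega}\). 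The task is then to propagate the vanishing of \(p\) from \(U_0\) across the annuli \(\Omega_k\) inward and conclude that the node \(0\) can lie neither in a gap between two annuli nor in any \(\Omega_k\), but only in the innermost interior \(I^{\gamma_1}\); by the nesting \(I^{\gamma_{j-1}}\subset I^{\gamma_j}\), \(0\in I^{\gamma_1}\) then forces \(0\in I^{\gamma_j}\) for all \(j\). \emph{This propagation is the main obstacle}: because \(\mathbb{C}\setminus\overline{\Omega}\) is disconnected, the identity theorem does not carry \(p\equiv 0\) inward, and one must exploit the sign of the source \(\nu\ge 0\) through a maximum-principle (obstacle-problem / quadrature-domain uniqueness) argument rather than the purely elementary contradiction, which only rules out \(0\in U_0\).

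Part~(b) reduces cleanly to the earlier results. Here \(n=2\) and one of \(\gamma_1,\gamma_2\) is a circle \(\partial D_\rho\) centered at \(0\); set \(c:=\int_{D_\rho}|z|^{2m}\,d\mu\). By Proposition~\ref{Prop1}(a) the monomials are orthogonal over the concentric disk, so \(\int_{D_\rho}|z|^{2m}\overline{z}^{\,k}\,d\mu=c\,\delta_{k,0}\). If \(\gamma_2=\partial D_\rho\) is the outer circle, then \(\Omega=D_\rho\setminus I^{\gamma_1}\) and subtracting \eqref{eq:exp_zero} from the disk identity gives \(\int_{I^{\gamma_1}}|z|^{2m}\overline{z}^{\,k}\,d\mu=(c-\lambda)\delta_{k,0}\); if instead \(\gamma_1=\partial D_\rho\) is the inner circle, then \(\Omega=I^{\gamma_2}\setminus D_\rho\) and adding gives \(\int_{I^{\gamma_2}}|z|^{2m}\overline{z}^{\,k}\,d\mu=(c+\lambda)\delta_{k,0}\). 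In either case the region in question is the interior of a single Jordan curve, hence simply connected, and it satisfies hypothesis \eqref{eq:exp_zero} with the coefficient at \(k=0\) equal to the positive integral of \(|z|^{2m}\) over that region. Proposition~\ref{Th:LocMon} then forces this interior to be a disk centered at \(0\), so the remaining curve is also a concentric circle and \(\Omega\) is a genuine annulus, as depicted in Figure~\ref{Fig2}. I expect no real difficulty here beyond the bookkeeping of the two cases, since all the analytic work is already carried by Proposition~\ref{Prop1}(a) and Proposition~\ref{Th:LocMon}.
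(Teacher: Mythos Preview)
Your argument for part~(b) is correct, and for the case where the \emph{inner} curve $\gamma_1$ is the circle it is actually neater than the paper's. Both you and the paper treat the outer-circle case $j=2$ identically: subtract the disk identity (Proposition~\ref{Prop1}(a)) from \eqref{eq:exp_zero} to obtain the moment condition on the simply connected region $I^{\gamma_1}$, then apply Proposition~\ref{Th:LocMon}. For $j=1$, the paper does not reduce to a simply connected domain; instead it takes the largest annulus $\Pi=D_R\setminus I^{\gamma_1}$ inside $\Omega$, picks $w_0\in\partial D_R\cap\gamma_2$, and reruns the nonnegative-integrand argument from the end of the proof of Proposition~\ref{Th:LocMon} on $\Omega\setminus\Pi$. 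Your additivity trick---adding the inner disk to obtain \eqref{eq:exp_zero} on the simply connected set $I^{\gamma_2}$ and then invoking Proposition~\ref{Th:LocMon} directly---handles both cases uniformly and buys a shorter proof.

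Part~(a), on the other hand, is not yet a proof. You correctly obtain $0\in I^{\gamma_n}$ from the Cauchy transform on the unbounded component $U_0$, and you correctly diagnose the difficulty: since $\mathbb{C}\setminus\overline{\Omega}$ is disconnected, neither the identity $C(w)=\lambda/\overline{w}$ nor the vanishing of your potential $p$ propagates to the inner complementary components by analytic continuation alone. But you then stop, labeling this ``the main obstacle'' and gesturing at a maximum-principle or obstacle-problem argument without carrying one out; that step carries essentially all of the content of part~(a), so as written the proposal is incomplete. The paper bypasses potential theory entirely and argues by induction on the number $n$ of curves, using at each stage the elementary positivity device from the final paragraph of the proof of Proposition~\ref{Th:LocMon}: once $0\in I^{\gamma_n}$ is established, one supposes $0$ lies in the outermost ring $\Omega_K$, grows the largest disc $D_R$ about $0$ inside $\Omega$, and forces $\Omega=D_R$ via the nonnegative integrand, contradicting $n>1$; this pushes $0$ past the next curve inward, and induction finishes. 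Whichever route you prefer, the inward step has to be written out.
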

\begin{proof}
(a) We will show by induction, that $0$ must be inside all curves $\gamma_j$, $ j = 1, \ldots, n$.

\emph{Case }$n=1$. Then $\Omega $ is the interior of $\gamma_1$, therefore  simply connected,  and it follows from the proof of Proposition~\ref{Th:LocMon},  that $0\in \Omega$. 

\emph{Case }$\emph{n=2}$\emph{. }Then\emph{\ }$\Omega  = I^{\gamma_2}\setminus I^{\gamma_1}$ and  
 $I^{\gamma_1}$ is
simply connected.
 We  apply, by assuming that $0\in \overline{(I^{\gamma_2})^{c}\text{%
.}}$ the
argument used in the first paragraph of Case $n=1$ to show that $0\in \left(
\Omega \cup I^{\gamma_1}\right) $. 
Then, either $0\in \Omega $ or $0\in
I^{\gamma_1}.$ In the first case we  consider again $D_{R}$, the
largest disc centered at zero contained in $\Omega $ and argue as in \emph{Case }$\emph{n=1}$  to show that $\Omega =D_{R}$, which contradicts the assumption that $n = 2$. Therefore,   $0\in  I^{\gamma_1}$

\emph{Arbitrary }$n\in \mathbb{N}$\emph{. } Assume that, for $n-1$ curves, $0$ is inside all curves. For $n$ curves, we first show that  $0\in I^{\gamma_n}$, assume that $0\in\Omega_K$ and use, as before, the argument from  \emph{Case }$\emph{n=1}$ to show that this leads to $n = 1$. Consequently, $0$ must be inside the remaining $n-1$ curves and, by induction hypothesis, inside all curves $\gamma_j, j=1\ldots n$.\\

(b) First assume that $\Omega$ is a disk, centered at zero,  with a hole, in other words, that $\gamma_2$ is a circle. Then, $I^{\gamma_2}$ is a disk centered and zero, such that \eqref{eq:exp_zero} holds for $I^{\gamma_2}$ an therefore also for $I^{\gamma_1}$. Since the latter is simply connected, it must be a disk centered at $0$.\\
Now let $I^{\gamma_1}$ enclose a disk centered at $0$. We then consider the largest annulus $\Pi$ contained in $\Omega$, it is given by $\Pi = D_R\setminus I^{\gamma_1}$ where $D_R$ is the largest disk centered at zero and contained in $I^{\gamma_2}$. Due to Proposition~\ref{Prop1}(a), condition \eqref{eq:exp_zero} holds on $\Pi$ and we obtain (\ref{formula1}) with $\Pi$ instead of $\Omega $. Pick a point $w_{0}\in \partial D_{R}\cap \gamma_2 $. Then
\[
\int_{\Omega \backslash \Pi}\left\vert z\right\vert ^{2m}\frac{\left\vert
z\right\vert ^{2}-\operatorname{Re}\overline{z}w_{0}}{\left\vert z-w_{0}\right\vert
^{2}}d\mu (z)=0.
\]%
and   $\left\vert \operatorname{Re}\overline{z}w_{0}\right\vert \leq \left\vert
z\right\vert \left\vert w_{0}\right\vert \leq |z|^{2}$ and the integrand is
positive on  $z\in \Omega \backslash \Pi$, which implies $\Omega = \Pi$.
\end{proof}
\section{An inverse problem for Gabor localization}\label{Se:InvProbGab}
In this section we prove Theorem~\ref{Th:main1} and derive the complete
solution of the classical eigenvalue problem \eqref{eq:EVP} from the
assumption, that any single solution is a Hermite function.\\
In the sequel, we identify $(x,\xi )$
with $z=x+i\xi $ and we recall that $\pi (z)\varphi(t)  = \pi (x,\xi)\varphi(t) = \varphi(t-x)e^{2\pi i\xi t}$.

\subsection{Bargmann transform}
In the Gabor case, the choice of the
Gaussian function $\varphi (t)=2^{\frac{1}{4}}e^{-\pi t^{2}}$ allows the
translation of the time-frequency localization operator $H_{\chi _{\Omega
,\varphi }}$ to the complex analysis set-up via the \emph{Bargmann transform} 
$\mathcal{B}$. $\mathcal{B}f$ is defined for functions of a real variable as 
\begin{equation}
\mathcal{B}f(z)=\int_{\mathbb{R}}f(t)e^{2\pi tz-\pi t^{2}-\frac{\pi }{2}%
z^{2}}dt=e^{-i\pi x\xi +\pi \frac{\left\vert z\right\vert ^{2}}{2}}\mathcal{V%
}_{\varphi }f(x,-\xi )\text{.}  \label{Bargmann}
\end{equation}%
$\mathcal{B}$ maps $L^{2}(\mathbb{R})$ unitarily onto $\mathcal{F}^{2}(%
\mathbb{C})$, the Bargmann-Fock space of analytic functions with the inner
product obtained by choosing the measure $d\mu (z)=e^{-\pi |z|^{2}}dz.$
\subsection{The Hermite functions} The normalized  monomials $e_{n}=(\pi ^{n}/n!)\cdot z^{n}=%
\mathcal{B}h_{n}(z)=e^{-i\pi x\xi +\pi \frac{\left\vert z\right\vert ^{2}}{2}%
}\mathcal{V}_{\varphi }h_{n}(z)$ form an orthonormal basis for $\mathcal{F}%
^{2}(\mathbb{C})$.  Here,  $h_{n}(t) = c_n e^{\pi t^2} ( \frac{d}{
dt})^n(e^{-2\pi t^2 } )$ are the Hermite functions, which, by appropriate choice of  $c_n$, provide an orthonormal basis of  $L^2(\mathbb{R})$. As a direct consequence of the unitarity of $\mathcal{B}$ and $\mathcal{V}_\varphi$, the set $\{\mathcal{V}_\varphi h_n, n\in\mathbb{N}\}$ is 
orthogonal over all discs $D_{R}$. 
\subsection{Proof of Theorem~\protect\ref{Th:main1}}\label{Se:Proof}
We first deduce the equivalent formulation of the eigenvalue problem %
\eqref{eq:EVP} in the Bargmann domain.  Since the Bargmann transform is unitary, \eqref{eq:EVP}
is equivalent to 
\begin{equation*}
\int_{\Omega }\mathcal{V}_{\varphi }f(\overline{z})\mathcal{B}(\pi (z)\varphi )(w
)\,dz=\lambda \mathcal{B}f(w )
\end{equation*}%
Now, since $\mathcal{B}(\pi (z)\varphi )(w )=e^{-\pi ix\xi }e^{-\pi
|z|^{2}/2}e^{\pi w \overline{z}}$, we write the previous equation as 
\begin{equation*}
\int_{\Omega }\mathcal{V}_{\varphi }f(\overline{z})e^{-\pi ix\xi }e^{-\pi
|z|^{2}/2}e^{\pi w \overline{z}}dz=\lambda \mathcal{B}f(w)\text{.}
\end{equation*}%
Thus, by \eqref{Bargmann}, we have 
\begin{equation*}
\int_{\Omega }\mathcal{B}f(z)e^{\pi \overline{z}w-\pi \left\vert
z\right\vert ^{2}}dz=\lambda \mathcal{B}f(w)\text{.}
\end{equation*}%
By the unitarity of the Bargmann transform we conclude that the eigenvalue
problem \eqref{eq:EVP} on $L^2 (\mathbb{R})$ is equivalent to%
\begin{equation}\label{Eq:BargProb}
\int_{\Omega }F(z)e^{\pi \overline{z}w-\pi \left\vert z\right\vert
^{2}}dz=\lambda F(w)\text{, }
\end{equation}%
on $\mathcal{F}^2\left( \mathbb{C}\right) $. 
We may now expand the kernel $e^{\pi \overline{z}w}$ in its power series which
transforms the eigenvalue equation to 
\begin{equation}
\lambda F(w)=\sum_{n=0}^{\infty }\frac{\pi ^{n}}{n!}w^{n}\int_{\Omega }F(z)%
\overline{z}^{n}e^{-\pi \left\vert z\right\vert ^{2}}dz  \label{eq:Bargeig1}
\end{equation}%
Now we use the assumption that $z^{m}$ solves \eqref{eq:Bargeig1} for $\lambda =\lambda _{m}$,
in other words, that any of the solutions of \eqref{eq:EVP} is a Hermite
function. Setting $F(z)=z^{m}$ then gives%
\begin{equation*}
\lambda _{m}w^{m}=\sum_{n=0}^{\infty }\frac{\pi ^{n}}{n!}w^{n}\int_{\Omega
}z^{m}\overline{z}^{n}e^{-\pi \left\vert z\right\vert ^{2}}dz\text{.}
\end{equation*}%
By the identity theorem for analytic functions, this implies 
\begin{equation*}
\int_{\Omega }\overline{z}^{n}z^{m}e^{-\pi \left\vert z\right\vert
^{2}}dz=\lambda _{m}\frac{m!}{\pi ^{m}}\delta _{n,m}.
\end{equation*}%
In particular, setting $n=m+k$, 
\begin{equation}
\int_{\Omega }\left\vert z\right\vert ^{2m}\overline{z}^{k}e^{-\pi
\left\vert z\right\vert ^{2}}dz=\lambda \delta _{k,0},\text{ for all }k\geq 1%
\text{.}  \label{eq:exp_zero1}
\end{equation}%
Now  Proposition~\ref{Th:LocMon} can be applied and we conclude that $\Omega $ must be
the union of $ \frac{n}{2}$  annuli centered at $0$ for even $n$ and the union of a disk and $ \frac{n-1}{2}$ annuli centered at $0$ for odd $n$. In particular, for simply connected $\Omega$, we obtain a disk centered at zero.

\subsection{Consequences of  Theorem~\ref{Th:main1}}

\begin{Cor}Let $\Omega$ be simply connected. 
If the Gabor transform of one of the eigenfunctions of the localization
operator $H_{\Omega }$ has Gaussian growth, $O(e^{-\pi \left\vert
z\right\vert ^{2}})$, then $\Omega $ must be a disk. The same conclusion
holds, if some eigenfunction has Gaussian growth in both the time and the
frequency domains.
\end{Cor}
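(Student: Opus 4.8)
The plan is to reduce both assertions to Theorem~\ref{Th:main1} by showing that each growth hypothesis forces the eigenfunction $f$ to be a scalar multiple of a single Hermite function. Once $f$ is known to be a Hermite function, Theorem~\ref{Th:main1} applies verbatim and yields that the simply connected domain $\Omega$ is a disk. Thus the entire work lies in the implication ``Gaussian decay $\Rightarrow$ $f$ is Hermite'', which I would treat separately for the two hypotheses.

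For the first assertion I would pass to the Bargmann side. By \eqref{Bargmann} one has $|\mathcal{B}f(z)| = e^{\pi|z|^2/2}|\mathcal{V}_\varphi f(x,-\xi)|$, so the hypothesised Gaussian decay of $\mathcal{V}_\varphi f$ yields a pointwise estimate of the form $|F(z)|\le C e^{-c|z|^2}$ for the entire function $F=\mathcal{B}f\in\mathcal{F}^2(\mathbb{C})$, with $c\ge 0$ determined by comparing the decay rate against the weight $e^{\pi|z|^2/2}$. If $c>0$ the bound forces $F\to 0$ at infinity, hence $F\equiv 0$ by Liouville, which is impossible for a nonzero eigenfunction; thus necessarily $c=0$, the estimate reduces to $|F(z)|\le C$, and Liouville's theorem makes $F$ a nonzero constant. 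Consequently $F=c\,e_0$ and, translating back through $\mathcal{B}$, $f=c\,\varphi=c\,h_0$, the ground Hermite function, so Theorem~\ref{Th:main1} applies.

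For the second assertion I would invoke the sharp (equality case) form of Hardy's uncertainty principle. If an eigenfunction $f$ satisfies $|f(t)|=O(e^{-\pi t^2})$ and $|\widehat{f}(\xi)|=O(e^{-\pi\xi^2})$, the two Gaussian rates are exactly matched, and Hardy's theorem forces $f(t)=c\,e^{-\pi t^2}$, again a multiple of $\varphi=h_0$. Hence $f$ is a Hermite function and Theorem~\ref{Th:main1} yields that $\Omega$ is a disk.

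The delicate points are the exponent bookkeeping and the use of the sharp uncertainty result. In the first part one must track the Gaussian weight $e^{\pi|z|^2/2}$ in \eqref{Bargmann} carefully, so that the decay of $\mathcal{V}_\varphi f$ is seen to be exactly strong enough to render $F=\mathcal{B}f$ bounded (the borderline $c=0$ case), which is what lets Liouville single out the constant, i.e.\ the ground state. In the second part the main obstacle is that a weaker or mismatched Gaussian rate would leave an infinite-dimensional space of admissible $f$; it is precisely the equal-rate, purely Gaussian (no polynomial factor) hypothesis that pins $f$ down to $h_0$ via the equality case of Hardy's inequality, so that the reduction to a single Hermite function, and hence to Theorem~\ref{Th:main1}, goes through.
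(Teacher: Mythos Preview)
Your overall strategy---show that the eigenfunction must be $h_0$ and then invoke Theorem~\ref{Th:main1}---is exactly what the paper does, and your treatment of the second assertion via the equality case of the classical Hardy theorem coincides with the paper's.

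For the first assertion the paper simply quotes the Hardy-type uncertainty theorem of Gr\"ochenig and Zimmermann for the short-time Fourier transform, while you pass to the Bargmann picture and apply Liouville directly. That is a legitimate and more self-contained alternative; in fact it is essentially how the relevant case of the Gr\"ochenig--Zimmermann result is proved. However, your argument contains a logical slip. The constant $c$ in $|F(z)|\le Ce^{-c|z|^2}$ is not a free parameter that you may force to equal $0$ by appealing to $f\ne 0$; it is fixed once the decay rate of $\mathcal V_\varphi f$ and the weight in \eqref{Bargmann} are given. Taking the exponent literally as stated in the corollary, $|\mathcal V_\varphi f(z)|\le Ce^{-\pi|z|^2}$, one computes $|F(z)|\le Ce^{-\pi|z|^2/2}$, i.e.\ $c=\pi/2>0$, and your own reasoning then forces $F\equiv 0$, so the hypothesis would be vacuous. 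The genuine critical rate is $e^{-\pi|z|^2/2}$ (this is precisely $|\mathcal V_\varphi\varphi|$); at that rate one obtains $c=0$ by direct computation, $F$ is bounded entire, hence constant, and $f=c\,h_0$ as desired. Thus your Liouville route works once the exponent is taken at the critical value; the sentence ``thus necessarily $c=0$'' should be replaced by that explicit computation, together with a remark that the exponent in the corollary is evidently meant at the critical rate.
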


\begin{proof}
This is a consequence of the version of Hardy's uncertainty principle for the Gabor transform proved by Gr\"{o}chenig and
Zimmermann~\cite{grzi01}. They showed that, if \ $\mathcal{V}%
_{g}f(z)=O(e^{-\pi \left\vert z\right\vert ^{2}})$, then both $f$ and $g$
must be time-frequency shifts of a Gaussian function. Therefore, under the
hypotheses of the corollary, the Gaussian (which is the first Hermite
function) is an eigenfunction of the localization operator $H_{\Omega }$ and
by Theorem 1, $\Omega $ must be a disk. The second statement follows in a
similar fashion from the classical Hardy uncertainty principle~\cite{ha33}.
\end{proof}

The result of Theorem~\ref{Th:main1} immediately implies that the complete
solution of \eqref{eq:EVP} is given by the orthonormal basis of Hermite
functions.

\begin{Cor}
Assume that an orthonormal  basis of $L^2 (\mathbb{R})$ has doubly orthogonal Gabor transform with respect to the Gaussian window $\varphi$ and some domain $\Omega$: 
\begin{equation}\label{Eq:DON}
	\int_\Omega \mathcal{V}_\varphi \varphi_j (z)\overline{\mathcal{V}_\varphi  \varphi_{j'} (z)}dz = c_j \delta_{j,j'} .
\end{equation}
Let $\Omega$ be simply connected or of the form stated in Corollary~\ref{Cor_JordanCirc}(b). If, for any $j_{0}$, $\varphi _{j_{0}}=h_{j_{0}}$
is a Hermite function, then for every $j\geq 0,$%
\begin{equation*}
\varphi_j=h_j\text{.}
\end{equation*}
\end{Cor}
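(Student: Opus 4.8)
The plan is to read the double-orthogonality hypothesis \eqref{Eq:DON} as a spectral statement about the localization operator $H_\Omega$ and then to transport the rigidity already established in Theorem~\ref{Th:main1}. First I would record that \eqref{Eq:DON} is exactly the assertion that $\{\varphi_j\}$ diagonalizes $H_\Omega$. Indeed, from $H_\Omega f=\int_\Omega \mathcal{V}_\varphi f(z)\,\pi(z)\varphi\,dz$ one computes $\langle H_\Omega\varphi_j,\varphi_{j'}\rangle=\int_\Omega \mathcal{V}_\varphi\varphi_j(z)\overline{\mathcal{V}_\varphi\varphi_{j'}(z)}\,dz=c_j\delta_{j,j'}$, and since $\{\varphi_j\}$ is an orthonormal basis of $L^2(\mathbb{R})$ this forces $H_\Omega\varphi_j=c_j\varphi_j$ for every $j$. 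Thus each $\varphi_j$ is an eigenfunction of $H_\Omega$ with eigenvalue $c_j$.

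Next I would feed in the single known eigenfunction. By hypothesis $\varphi_{j_0}=h_{j_0}$, so $H_\Omega$ admits a Hermite function among its eigenfunctions. If $\Omega$ is simply connected, Theorem~\ref{Th:main1} forces $\Omega$ to be a disk centered at $0$; if $\Omega$ has the form of Corollary~\ref{Cor_JordanCirc}(b), the same hypothesis forces $\Omega$ to be an annulus centered at $0$. In either case Proposition~\ref{Prop1} (equivalently Daubechies' direct theorem, transported through the Bargmann transform) identifies the complete list of eigenfunctions of $H_\Omega$ as the Hermite basis $\{h_n\}$, with $h_n$ attached to the eigenvalue $\lambda_n$ given by the moment ratio in Proposition~\ref{Prop1}(d).

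The decisive point, and the main obstacle, is the \emph{simplicity} of this spectrum. For a disk $D_R$ the eigenvalue of $h_n$ reduces, after the substitution $s=\pi r^2$, to the regularized lower incomplete Gamma function $\lambda_n=\gamma(n+1,\pi R^2)/n!$; the elementary identity $\lambda_n-\lambda_{n+1}=(\pi R^2)^{n+1}e^{-\pi R^2}/(n+1)!>0$ shows that the eigenvalues are strictly decreasing, hence pairwise distinct. Consequently every eigenspace of $H_\Omega$ is one-dimensional, spanned by a single $h_n$. For the annulus of Corollary~\ref{Cor_JordanCirc}(b) the analogous sequence is a difference of two such incomplete Gamma values, so simplicity is more delicate and is where the argument really has to work; here the presence in the relevant eigenspace of the genuine Hermite eigenfunction $\varphi_{j_0}=h_{j_0}$ must be exploited to pin down the remaining basis vectors.

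Finally I would assemble the conclusion. Since each $\varphi_j$ is an eigenfunction and each eigenspace is the line $\mathbb{C}h_n$, every $\varphi_j$ coincides with some $h_{n(j)}$ up to a unimodular factor; completeness of both $\{\varphi_j\}$ and $\{h_n\}$ makes $j\mapsto n(j)$ a bijection, and indexing both families by decreasing eigenvalue gives $n(j)=j$. Hence $\varphi_j=h_j$ for every $j\geq 0$, the phase being irrelevant for the identification of orthonormal bases.
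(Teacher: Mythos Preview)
Your argument follows exactly the paper's route: identify the double-orthogonality condition \eqref{Eq:DON} with the statement that $\{\varphi_j\}$ is an eigenbasis of $H_\Omega$, invoke Theorem~\ref{Th:main1} (or Corollary~\ref{Cor_JordanCirc}(b)) on the single Hermite eigenfunction to force $\Omega$ to be a disk or an annulus centered at $0$, and then appeal to the direct result (Daubechies/Proposition~\ref{Prop1}) that in this case the Hermite functions are the eigenfunctions. The paper's own proof stops there in three sentences.

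You go further in two respects that the paper simply glosses over. First, you verify \emph{simplicity} of the spectrum in the disk case via the incomplete-Gamma identity, which is needed to conclude that each $\varphi_j$ lies on a line $\mathbb{C}h_n$ rather than in a possibly larger eigenspace; the paper does not mention this. Second, you are explicit about the residual indexing/phase ambiguity and resolve it by ordering by eigenvalue; again the paper is silent and its conclusion ``$\varphi_j=h_j$'' should really be read up to a unimodular constant and a relabelling. Your caveat about the annulus case is well taken: there the eigenvalues are differences $\lambda_n(R_2)-\lambda_n(R_1)$ of two regularized incomplete Gammas, and strict monotonicity (hence simplicity) is no longer automatic. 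The paper does not address this either, so your proof is at least as complete as the original on this point.
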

\begin{proof}
Note that an orthonormal basis  of $L^2 (\mathbb{R})$ satisfies \eqref{Eq:DON} if and only if it consists of eigenfunctions of the localization operator $H_\Omega$. 
Hence, we are in the situation of Theorem~\ref{Th:main1}, and $\Omega$ must be  disk centered at zero, the union of a disk and a finite number of annuli centered at zero or an  annulus centered at zero, respectively. This, in turn, implies that all eigenfunctions  are Hermite functions.
\end{proof}
\begin{remark}\label{Rem1}
Note the following consequence of Theorem~\ref{Th:main1}: if the localization
domain $\Omega$ is not a disk, then the function of optimal concentration
inside $\Omega$, in the sense of \eqref{Eq:TFcon},  cannot be a Gaussian window. On the other hand, it is well-known that Gaussian windows uniquely minimize the Heisenberg uncertainty relation. In this sense, disks seem to be the optimal domain for measuring time-frequency concentration. \\
Gaussian windows $\varphi$ (and also higher order Hermite functions) are a popular choice for the basic atom in the generation of Gabor frames, whose members are given as $\pi (\lambda) \varphi$, $\lambda\in\Lambda$ for some  discrete subgroup $\Lambda\subset\mathbb{R}^2$. A  popular choice of $\Lambda$ is 
$\Lambda = a\mathbb{Z}\times b\mathbb{Z}$, i.e. a rectangular lattice. In this case, the fundamental domain of $\Lambda$ in $\mathbb{R}^2$ is rectangular and thus, according to Theorem~\ref{Th:main1}, no Hermite function can be maximally concentrated inside the fundamental domain. This observation suggests that Gaussian or Hermitian windows are not an ideal choice for generating Gabor frames along a rectangular lattice. Although no proof for a precise statement exists, this observation has been made before and it is consistently confirmed in  numerical experiments.
In particular, in~\cite{best03} it is mentioned that Gabor frames generated by time-frequency shifting Gaussian pulses over a  hexagonal lattice have  better condition number than frames obtained via a corresponding rectangular lattice. It is well-known and was shown by Gauss in 1840 that hexagonal lattices provide the  densest packing of circles in the plane~\footnote{Weisstein, Eric W. "Circle Packing." From MathWorld--A Wolfram Web Resource. \url{http://mathworld.wolfram.com/CirclePacking.html} }. On the other hand, it is known that a Gabor frame with a Gaussian basic window is never tight~\cite{ga09}.\\
Motivated by our observations we formulate the following \emph{conjecture}: \\Given a fixed redundancy $red>1$, the condition number of a Gabor frame with Gaussian window $\varphi$ is optimal for a hexagonal lattice.

\end{remark}

\subsection{Remark (Due to Karlheinz Gr\"ochenig)}\label{RemKH}
Since Daubechies' results also extend to localization operators with symbols $\sigma$ other than indicator functions, stating that any radial symbol equally leads to localization operators diagonalized by the Hermite functions, one may ask the obvious question, whether a similar inverse statement to Theorem~\ref{Th:main1} can be expected for more general symbol classes than indicator functions. The following example shows that this is not true.

Let $H_\sigma$ be a time-frequency localization operator. Then, for every $N\in\mathbb{N}_0$ there exist non-negative, non-radial  symbols $\sigma$, such that $H_\sigma h_N = \lambda h_N$.\\
To construct $\sigma$, we proceed as in Section~\ref{Se:Proof} and consider the equivalent operator  on $\mathcal{F}^2\left( \mathbb{C}\right) $, i.e.
\begin{equation*}
T_\sigma F (w ) = \int_{\mathbb{C}}\sigma (z) F(z)e^{\pi \overline{z}w-\pi \left\vert z\right\vert
^{2}}dz\text{. }
\end{equation*} We then claim that $T_\sigma (z^N ) = \lambda z^N$ for some non-radial $\sigma$.  We fix $N\in\mathbb{N}_0$ and let
\[\sigma (z)  = \sigma (r e^{2\pi i  t } ) = \sigma_0 (r) + \sigma_1 (r ) \cdot (  e^{2\pi i(N+1)  t }+e^{-2\pi i (N+1) t }),\]
where $\sigma_0 (r) \geq 2 | \sigma_1 (r)|$ $\forall r\geq 0$ and $\int_0^\infty \sigma_1 (r) r^{3N+1} e^{-\pi r^2} r\,dr = 0$. Observe that $\sigma_1$ can be chosen to be bounded, compactly supported and real-valued. Then we have $\sigma (z)  \geq \sigma_0 (r) -2| \sigma_1 (r )|\geq 0$. Since $\sigma_0$ is radial, we have 
$T_{\sigma_0} (z^N ) = \lambda_N z^N$ with $\lambda_N>0$. Therefore, it is enough to show that $T_{\sigma_1} (z^N ) = 0$. However, this is easy to see by considering 
$\sigma_+ = \sigma_1 (r )  e^{2\pi (N+1)i  t }$ and $\sigma_- = \sigma_1 (r ) e^{-2\pi i (N+1) t }$ separately and noting that, since $T_{\sigma_{\pm}}$ is entire, the task is reduced to showing that $\frac{d^l}{dw^l}  T_{\sigma_{\pm}}(z^N) |_{w = 0} = 0$ for all $l\in\mathbb{N}_0$. A straightforward calculation shows that, setting $F(z) = z^N$ and writing 
$( T_{\sigma_{\pm}}(f))^{(l)} = \frac{d^l}{dw^l}  T_{\sigma_{\pm}}(f)$,
 we have 
 \[( T_{\sigma_{\pm}}(F))^{(l)} (0 )  = \int_{\mathbb{C}}\sigma_{\pm} (z) z^N(\pi\overline{z})^l e^{-\pi |z|^2} dz.\]
We finally substitute polar coordinates $z = re^{2\pi i t}$ to obtain, for $\sigma_+$:
 \[( T_{\sigma_{\pm}}(F))^{(l)} (0 )  = \pi^l \int_0^\infty \sigma_1 (z) r^{N+l} e^{-\pi r^2} r\,dr\int_0^1 e^{2\pi i (2N+1 -l)t} dt.\] The integral over $t$ is zero for $l\neq 2N+1$  by orthogonality of the Fourier basis and the integral over $r$ is zero for $l =  2N+1$ by assumption. The argument for  $\sigma_-$ is similar.

\section{An inverse problem for wavelet localization}

\label{SE:WavLoc}

By replacing \textquotedblleft Gabor transform\textquotedblright\ by
\textquotedblleft wavelet transform\textquotedblright\ in the formulation of
the inverse problem for time-frequency localization, we may define a
completely analogous inverse problem for wavelet localization. The corresponding direct
problem has been treated by Daubechies and Paul in~\cite{dapa88} and by Seip
in~\cite{se91}. This section is related to the previous one in the same way
as the direct problem studied in~\cite{dapa88} is related to the problem
studied in~\cite{da88}. It is quite remarkable that, after appropriate
reformulation of the eigenvalue problem, we can apply Proposition~\ref{Th:LocMon}
to wavelet localization operators. Since our arguments depend on the
connection to complex variables, it is essential to consider the Hardy space
of the upper-half plane as the domain of the wavelet transform. Then, we
choose a certain analyzing wavelet which plays the role of the Gaussian and
the localization problem can be reformulated in certain weighted Bergman
spaces. This basic strategy follows the lines which lead to the
Bargmann-Fock space formulation in the Gabor case.

One relevant difference between the wavelet and the Gabor case stems from
the hyperbolic geometry of the upper-half plane. Since the set-up of
Proposition~\ref{Prop1} is not visible in the spaces defined on the
half-plane, we will translate the problem to a conformally equivalent
hyperbolic region: the unit disc. There, the problem finds a natural
formulation and Proposition~\ref{Th:LocMon} applies. This point of view is
suggested by Seip's approach in~\cite{se91}. In short, while in the Gabor case the Bargmann
transform maps $L^2(\mathbb{R})$ to the Bargmann-Fock space, where the
monomials are orthogonal,%
\begin{equation*}
\mathcal{B}:L^{2}(\mathbb{R})\rightarrow \mathcal{F}^{2}(\mathbb{C})
\end{equation*}%
we now need to further transform the images of the so called Bergman
transform ($Ber_{\alpha }$) to a space defined in the unit disc. This
transformation is given by a Cayley transform $T_{\alpha }$, as defined in
Section~\ref{SS:BergSp}: 
\begin{equation}
H^{2}(\mathbb{C}^{+})\overset{Ber_{\alpha }}{\rightarrow }A_{\alpha }(%
\mathbb{C}^{+})\overset{T_{\alpha }}{\rightarrow }A_{\alpha }(\mathbb{D})%
\text{.}  \label{isomo_wavelet}
\end{equation}%
The role of the Hermite functions is taken over by special functions, whose
Fourier transforms are the Laguerre functions. This is possible, since the Laguerre functions constitute an orthogonal basis for $L^2 (0,\infty)$ and the Fourier transform provides a unitary isomorphism  $H^{2}(\mathbb{C}^{+})\rightarrow L^2 (0,\infty)$.

\subsection{The wavelet transform}

Since analyticity will play a fundamental role, in this section we restrict
ourselves to functions in a subspace of $L^2(\mathbb{R})$, namely to $f\in H^{2}(\mathbb{C}^{+})$, the Hardy space in the
upper half plane. $H^{2}(\mathbb{C}^{+})$ is constituted by analytic functions $f$ such that 
\begin{equation*}
\text{ }\sup_{0<s<\infty }\int_{-\infty }^{\infty }\left\vert
f(x+is)\right\vert ^{2}dx<\infty \text{.}
\end{equation*}%
The functions in the space $H^{2}(\mathbb{C}^{+})$ may be considered as being  of
\textquotedblleft positive frequency\textquotedblright\ since a well known
Paley-Wiener theorem says that $\mathcal{F}(H^{2}(\mathbb{C}%
^{+}))=L^{2}(0,\infty )$. For this reason it is common to study $H^{2}(%
\mathbb{C}^{+})$ on the \textquotedblleft frequency side\textquotedblright ,
where many calculations become easier. For convenience we will use a different normalization of the Fourier transform in this section, namely $(%
\mathcal{F}f)(\xi )=\left( 2\pi \right) ^{-\frac{1}{2}}\int_{-\infty
}^{\infty }e^{-i\xi t}f(t)dt$. Now consider $%
\mathbb{C}
^{+}=\{z\in 
\mathbb{C}
:\operatorname{Re}(z)>0\}.$ For every $x\in \mathbb{R}$ and $s\in \mathbb{R}^{+}$,
let $z=x+is\in 
\mathbb{C}
^{+}$ and define%
\begin{equation*}
\pi _{z}g(t)=s^{-\frac{1}{2}}g(s^{-1}(t-x)).
\end{equation*}%
Fix a function $g\neq 0$ such that 
\begin{equation*}
0<\left\Vert \mathcal{F}g\right\Vert _{L^{2}(%
\mathbb{R}
^{+},t^{-1})}^{2}=C_{g} <\infty.
\end{equation*}%
Such functions are called \emph{admissible }and the constant $C_{g}$ is the 
\emph{admissibility constant}. Then \emph{the continuous wavelet transform}
of a function $f$ with respect to a wavelet\ $g$ is defined, for every$z=x+is\in 
\mathbb{C}$ as 
\begin{equation}
W_{g}f(z)=\left\langle f,\pi _{z}g\right\rangle _{H^{2}\left( \mathbb{C}%
^{+}\right) }.  \label{wavelet}
\end{equation}%
Let $d\mu ^{+}(z)$ denote the standard normalized area measure in $\mathbb{C}^{+}$%
. The orthogonal relations for the wavelet transform%
\begin{equation}
\int_{\mathbb{C}^{+}}W_{g_{1}}f_{1}(x,s)W_{g_{2}}f_{2}(x,s)s^{-2}d\mu
^{+}(z)=\left\langle \mathcal{F}g_{1},\mathcal{F}g_{2}\right\rangle _{L^{2}(%
\mathbb{R}
^{+},t^{-1})}\left\langle f_{1},f_{2}\right\rangle _{H^{2}\left( \mathbb{C}%
^{+}\right) }\text{,}  \label{ortogonalityrelations}
\end{equation}%
are valid for all $f_{1},f_{2}\in H^{2}\left( \mathbb{C}^{+}\right) $ and $%
g_{1},g_{2}\in H^{2}\left( \mathbb{C}^{+}\right) $ admissible. As a result,
the continuous wavelet transform provides an isometric inclusion%
\begin{equation*}
W_{g}:H^{2}\left( \mathbb{C}^{+}\right) \rightarrow L^{2}(\mathbb{C}^{+}%
\mathbf{,}s^{-2}dxds)\text{,}
\end{equation*}%
which is an isometry for $C_{g}=1$.

\subsection{Bergman spaces}

\label{SS:BergSp}

{Let }$\alpha >-1$ and $d\mu ^{+}(z)$ denote  the standard normalized area measure  in $\mathbb{C}%
^{+}$. The \emph{Bergman space in the upper half plane}, $%
A_{\alpha }(\mathbb{C}^{+})$, is constituted by the analytic functions in $%
\mathbb{C}^{+}$ such that 
\begin{equation}
\int_{\mathbb{C}^{+}}\left\vert f(z)\right\vert ^{2}s^{\alpha }d\mu
^{+}(z)<\infty \text{.}  \label{normupperBergman}
\end{equation}%
Now consider $\mathbb{D=\{}z\in \mathbb{C}:\left\vert z\right\vert <1%
\mathbb{\}}$ with normalized area measure $dA(w)$.\ The \emph{Bergman space in the unit disc}, denoted by $%
A_{\alpha }(\mathbb{D})$, is constituted by the analytic functions in $%
\mathbb{D}$ such that 
\begin{equation}
\int_{\mathbb{D}}\left\vert f(w)\right\vert ^{2}(1-\left\vert w\right\vert
)^{\alpha }dA(w)<\infty \text{.}  \label{normbergman}
\end{equation}%
 The map $%
T_{\alpha }:A_{\alpha }(\mathbb{C}^{+})\rightarrow A_{\alpha }(\mathbb{D})$, defined as 
\begin{equation*}
(T_{\alpha }f)(w)=\frac{2^{ \frac{\alpha}{2}+1}}{( 1-w)^{\alpha +2}}f\left( 
\frac{w+1}{i(w-1)}\right) \text{,}
\end{equation*}%
provides a unitary isomorphism between the two spaces. The reproducing kernel of $%
A_{\alpha }(\mathbb{C}^{+})$ is%
\begin{equation*}
\mathcal{K}_{\mathbb{C}^{+}}^{\alpha }(z,w)=\left( \frac{1}{w-\overline{z}}%
\right) ^{\alpha +2}\text{.}
\end{equation*}%
\ Now observe that, letting $T_{\alpha }$ act on the reproducing kernel of $%
A_{\alpha }(\mathbb{C}^{+})$, first as a function of $w$ and then as a
function of $\overline{z}$, we are led to the reproducing kernel of $%
A_{\alpha }(\mathbb{D})$,  
\begin{equation}
\mathcal{K}_{\mathbb{D}}^{\alpha }(z,w)=\frac{1}{(1-w\overline{z})^{\alpha +2
}}\text{.}  \label{repBergman}
\end{equation}

\subsection{The Bergman transform}
Let $\Gamma $\ denote  the Gamma function and set 
\begin{equation*}
c_{\alpha }^{2}=\int_{0}^{\infty }t^{2\alpha -1}e^{-2t}dt=2^{2\alpha
-1}\Gamma (2\alpha )\text{,}
\end{equation*}
We can relate the wavelet transform to Bergman spaces of analytic functions by choosing
 the window $\psi _{\alpha }$ as
\begin{equation}
\mathcal{F}\psi _{\alpha }(t)=\frac{1}{c_{\alpha }}\mathbf{1}_{\left[
0,\infty \right] }t^{\alpha }e^{-t}\text{.}  \label{fg0}
\end{equation} 
 The choice of  $c_{\alpha }$ implies $%
C_{\psi _{\alpha }}=1$ and the corresponding wavelet transform is isometric.
The \emph{Bergman transform }of order\emph{\ }$\alpha $ is the unitary map$\
Ber_{\alpha }$ $:H(\mathbb{C}^{+})\rightarrow A_{\alpha }(\mathbb{C}^{+})$
given by 
\begin{equation}
Ber_{\alpha }\text{ }f(z)=s^{-\frac{\alpha }{2}-1}W_{\overline{\psi} _{\frac{%
\alpha +1}{2}}}f(-x,s)=c_{\alpha }\int_{0}^{\infty }t^{\frac{\alpha +1}{2}}(%
\mathcal{F}f)(t)e^{izt}dt\text{.}  \label{Ber}
\end{equation}

\subsection{The Laguerre and other related systems of functions}

We define the Laguerre functions 
\begin{equation*}
\emph{l}_{n}^{\alpha }(x)=\mathbf{1}_{\left[ 0,\infty \right]
}(x)e^{-x/2}x^{\alpha /2}L_{n}^{\alpha }(x)\text{.}
\end{equation*}
in terms of the Laguerre polynomials 
\begin{equation}
L_{n}^{\alpha }(x)=\frac{e^{x}x^{-\alpha }}{n!}\frac{d^{n}}{dx^{n}}\left[
e^{-x}x^{\alpha +n}\right] =\sum_{k=0}^{n}(-1)^{k}\binom{n+\alpha }{n-k}%
\frac{x^{k}}{k!}.  \label{Rodr}
\end{equation}%
\ By repeated integration by parts, one sees that the polynomials $%
L_{n}^{\alpha }(x)$ are orthogonal on $(0,\infty )$ with respect to the
weight function $e^{-x}x^{\alpha }$ .\ Thus, for $\alpha \geq 0$, the Laguerre
functions $\emph{l}_{n}^{\alpha }$ constitute an orthogonal basis for the space $L^{2}(0,\infty )$.
We will use a related system of functions $\psi _{n}^{\alpha }$ defined as 
\begin{equation*}
\left( \mathcal{F}\psi _{n}^{\alpha }\right) (t)=\left( \frac{(-1)^{n}n!}{%
2^{2\alpha +2n+1}\Gamma (n+2+\alpha )\Gamma (2+\alpha )}\right) ^{\frac{1}{2}%
}l_{n}^{\alpha +1}(2t)\text{.}
\end{equation*}
Now consider the monomials 
\begin{equation*}
e_{n}^{\alpha }(w)=\left( \frac{\Gamma (n+2+\alpha )}{n!\Gamma (2+\alpha )}%
\right) ^{\frac{1}{2}}w^{n}.
\end{equation*}%
We can apply Proposition 1 with $\mu (\left\vert z\right\vert
)=(1-\left\vert w\right\vert ^{2})^{\alpha }$. We  conclude that $%
\{e_{n}^{\alpha }\}_{n=0}^{\infty }$ \ forms an orthonormal basis for $%
\mathcal{A}_{\alpha }(\mathbb{D})$ and that they are orthogonal on every
disk $D_{r}\subset \mathbb{D}$: for every $r>0$,%
\begin{equation}
\int_{D_{r}}e_{n}^{\alpha }(w)\overline{e_{m}^{\alpha }(w)}(1-\left\vert
w\right\vert ^{2})^{\alpha }dA(w)=C(r,m)\delta _{nm}\text{.}  \label{ortdisk}
\end{equation}%
The normalization constant $C(r,m)$ depends on $r$ and $m$ and satisfies $%
\lim_{r\rightarrow 1^{-}}C(r,m)=1$. Now, the functions $\Psi _{n}^{\alpha }$
, for every $n\geq 0$ and $\alpha >-1$,%
\begin{equation*}
\Psi _{n}^{\alpha }(z)=\frac{1}{4^{\alpha +\frac{1}{2}}}\left( \frac{\Gamma
(n+2+\alpha )}{n!\Gamma (2+\alpha )}\right) ^{\frac{1}{2}}\left( \frac{z-i}{%
z+i}\right) ^{n}\left( \frac{1}{z+i}\right) ^{\alpha +2}\text{, \ \ \ \ \ }%
z\in 
\mathbb{C}
^{+}\text{,}
\end{equation*}%
are conveniently choosen such that%
\begin{equation}
(T_{\alpha }\Psi _{n}^{2\alpha })(w)=e_{n}^{\alpha }(w).  \label{translation}
\end{equation}%
Thus, a change of variables $w=\frac{z-i}{z+i}$ in (\ref{ortdisk}) leads to%
\begin{equation}
\int_{\varrho (z,i)<r}\Psi _{n}^{\alpha }(z)\overline{\Psi _{n}^{\alpha }(z)}%
s^{\alpha }d\mu ^{+}(z)=C(r,m)\delta _{nm}\text{,}  \label{double_ort}
\end{equation}%
where $\varrho (z_{1},z_{2})=\left\vert \frac{z_{1}-z_{2}}{z_{1}-\overline{%
z_{2}}}\right\vert $ is the pseudohyperbolic metric on $\mathbb{C}^{+}$.
Moreover, the unitarity of the operator $T_{\alpha }$ translates the basis
property of  $\{e_{n}^{\alpha }\}_{n=0}^{\infty }$ \ in $\mathcal{A}_{\alpha
}(\mathbb{D})$ to $\mathcal{A}_{\alpha }(\mathbb{C}^{+})$. In other words, (%
\ref{translation}) shows that $\{\Psi _{n}^{\alpha }(z)\}_{n=0}^{\infty }$
is an orthogonal basis of $\mathcal{A}_{\alpha }(\mathbb{C}^{+})$. \ Finally
we observe that (\ref{Ber}) together with the special function formula 
\begin{equation}
\int_{0}^{\infty }x^{\alpha }L_{n}^{\alpha }(x)e^{-xs}dx=\frac{\Gamma
(\alpha +n+1)}{n!}s^{-\alpha -n-1}(s-1)^{n}  \label{formlaguerre}
\end{equation}%
gives 
\begin{equation*}
Ber_{\alpha }\text{ }\psi _{n}^{\alpha }=\Psi _{n}^{\alpha }\text{.}
\end{equation*}%
For an intuitive grasp of this section, keep in mind that to the composition
of transforms (\ref{isomo_wavelet}) one associates the transformations of
the basis functions:%
\begin{equation*}
\psi _{n}^{\alpha }\in H(\mathbb{C}^{+})\overset{Ber_{\alpha }}{\rightarrow }%
\Psi _{n}^{\alpha }\in A_{\alpha }(\mathbb{C}^{+})\overset{T_{\alpha }}{%
\rightarrow }e_{n}^{\alpha }\in A_{\alpha }(\mathbb{D}).
\end{equation*}

\subsection{The inverse problem}

We now consider the wavelet localization operator $P_{\Delta ,g}$ defined as%
\begin{equation*}
P_{\Delta ,\alpha }f=\int_{\Delta }W_{\overline{\psi} _{\frac{\alpha +1}{2}}}%
f(z)\pi _{z}\overline{\psi} _{\frac{\alpha +1}{2}}(t)d\mu ^{+}(z)
\end{equation*}
and set up the corresponding eigenvalue problem%
\begin{equation}  \label{Eq:WavEIG}
P_{\Delta ,\alpha }f=\lambda f
\end{equation}

\begin{theorem}
\label{Th:main2} If one of the eigenfunctions of the localization operator $%
P_{\Delta ,\alpha }$ belongs to the family $\{\psi _{n}^{\alpha }\}$, then $%
\Delta $ must be a pseudohyperbolic disc centered at $i$.
\end{theorem}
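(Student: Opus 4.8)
The plan is to follow the strategy of the proof of Theorem~\ref{Th:main1} (Section~\ref{Se:Proof}), now using the chain of unitary maps \eqref{isomo_wavelet} in place of the Bargmann transform. First I would transport the eigenvalue problem \eqref{Eq:WavEIG} from $H^2(\mathbb{C}^+)$ to the Bergman space $A_\alpha(\mathbb{C}^+)$ via $Ber_\alpha$. Writing $P_{\Delta,\alpha}=W^{\ast}\chi_\Delta W$ with $W=W_{\overline{\psi}_{\frac{\alpha+1}{2}}}$, and using that $Ber_\alpha$ is unitary and differs from $W$ only by the weight $s^{-\alpha/2-1}$ and the reflection $x\mapsto -x$, the operator becomes the Toeplitz-type integral operator given by integration against the reproducing kernel $\mathcal{K}_{\mathbb{C}^+}^{\alpha}$ of $A_\alpha(\mathbb{C}^+)$ restricted to $\Delta$. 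Applying next the Cayley transform $T_\alpha$ and recalling \eqref{repBergman}, the problem is carried to $A_\alpha(\mathbb{D})$ and takes the form
\begin{equation*}
\int_{\Delta'} F(w')\,\frac{1}{(1-w\overline{w'})^{\alpha+2}}\,(1-|w'|^2)^{\alpha}\,dA(w') = \lambda F(w),
\end{equation*}
where $\Delta'=T_\alpha(\Delta)\subset\mathbb{D}$ is the image of $\Delta$ under $w=(z-i)/(z+i)$.

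Next I would use the hypothesis. By \eqref{translation} together with the unitarity of $Ber_\alpha$ and $T_\alpha$, the assumption that some $\psi_m^\alpha$ is an eigenfunction of $P_{\Delta,\alpha}$ translates into the statement that the monomial $e_m^\alpha$, and hence $w^m$, is an eigenfunction of the operator above. Expanding the reproducing kernel as a power series,
\begin{equation*}
\frac{1}{(1-w\overline{w'})^{\alpha+2}}=\sum_{n\geq 0}\binom{n+\alpha+1}{n}\,w^{n}\,\overline{w'}^{\,n},
\end{equation*}
substituting $F(w')=w'^m$ and invoking the identity theorem for analytic functions exactly as in \eqref{eq:Bargeig1}--\eqref{eq:exp_zero1}, I obtain, after setting $n=m+k$ and dividing by the nonzero coefficient,
\begin{equation*}
\int_{\Delta'}|w'|^{2m}\,\overline{w'}^{\,k}\,(1-|w'|^2)^{\alpha}\,dA(w')=\lambda\,\delta_{k,0}.
\end{equation*}
This is precisely the double orthogonality condition \eqref{eq:exp_zero} for the concentric measure $d\mu(w')=(1-|w'|^2)^{\alpha}dA(w')$ on $D_a=\mathbb{D}$, whose moments are finite, as already recorded in the application of Proposition~\ref{Prop1} with $\mu(|w|)=(1-|w|^2)^\alpha$.

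With this in hand, Proposition~\ref{Th:LocMon} applies and forces $\Delta'$ to be a Euclidean disc centered at the origin of $\mathbb{D}$ (under the simple-connectedness hypothesis on $\Delta$, in analogy with Theorem~\ref{Th:main1}). Finally I would pull this conclusion back through the Cayley transform: since $w=(z-i)/(z+i)$ sends $i$ to $0$ and, as recorded in \eqref{double_ort}, carries the pseudohyperbolic discs $\{z:\varrho(z,i)<r\}$ onto the Euclidean discs $\{w:|w|<r\}$, a disc centered at $0$ in $\mathbb{D}$ corresponds exactly to a pseudohyperbolic disc centered at $i$ in $\mathbb{C}^+$, which is the assertion.

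The main obstacle is the very first step: verifying that $Ber_\alpha$, together with $T_\alpha$, genuinely conjugates the wavelet localization operator $P_{\Delta,\alpha}$ into the integral operator with the restricted Bergman reproducing kernel. This requires the orthogonality relations \eqref{ortogonalityrelations} and careful tracking of the weight $s^{-\alpha/2-1}$, the reflection in $x$, and, most delicately, the index shift between the parameter of the operator and that of the Bergman space (cf.\ the mismatch between $Ber_\alpha\psi_n^\alpha=\Psi_n^\alpha$ and $T_\alpha\Psi_n^{2\alpha}=e_n^\alpha$ in \eqref{translation}). Once the reproducing-kernel form of the operator is secured, the rest is a direct transcription of the Gabor argument combined with Proposition~\ref{Th:LocMon}.
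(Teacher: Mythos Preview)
Your proposal is correct and follows essentially the same route as the paper's proof: transport \eqref{Eq:WavEIG} via $Ber_\alpha$ to the Toeplitz operator with kernel $\mathcal{K}_{\mathbb{C}^+}^\alpha$ on $A_\alpha(\mathbb{C}^+)$, then via $T_\alpha$ to $A_\alpha(\mathbb{D})$, expand the kernel as a power series, compare coefficients, and apply Proposition~\ref{Th:LocMon}. The paper dispatches the ``main obstacle'' you flag in one line by computing $Ber_\alpha(\pi_z\psi_{\frac{\alpha+1}{2}})(w)=m_\alpha\, s^{\frac{\alpha+2}{2}}\mathcal{K}_{\mathbb{C}^+}^\alpha(z,w)$ directly from \eqref{Ber}; the index mismatch you noticed in \eqref{translation} is indeed a slip in the paper (it uses $T_\alpha\Psi_n^\alpha=e_n^\alpha$ in the proof without comment), so your caution there is well placed.
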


\begin{proof}
We first rewrite the eigenvalue problem \eqref{Eq:WavEIG}. A simple change
of variables on the \textquotedblleft Fourier\textquotedblright\ side of the
wavelet representation gives 
\begin{equation*}
Ber_{\alpha }(\pi _{z}\psi _{\frac{\alpha +1}{2}})(w)=m_{\alpha }s^{\frac{%
\alpha +2}{2}}\left( \frac{1}{z-\overline{w}}\right) ^{\alpha +2}=s^{\frac{%
\alpha +2}{2}}\mathcal{K}_{\mathbb{C}^{+}}^{\alpha }(z,w),
\end{equation*}%
where $m_{\alpha }=\frac{\alpha +1}{2^{\alpha }}$. Now apply the Bergman
transform and use (\ref{Ber}) to rewrite \eqref{Eq:WavEIG} as 
\begin{equation*}
\int_{\Delta }Ber_{\alpha }\text{ }f(z)\mathcal{K}_{\mathbb{C}^{+}}^{\alpha
}(z,w)s^{\alpha }d\mu ^{+}(z)=\lambda Ber_{\alpha }\text{ }f(w)\text{.}
\end{equation*}%
By the unitarity $Ber_{\alpha }$ $:H(\mathbb{C}^{+})\rightarrow A_{\alpha }(%
\mathbb{C}^{+})$ we conclude that our eigenvalue problem is equivalent to%
\begin{equation*}
\int_{\Delta }F(z)\mathcal{K}^\alpha_{\mathbb{C}^{+}}(z,w)s^{\alpha }d\mu
^{+}(z)=\lambda F(w)\text{, }
\end{equation*}%
with $F\in A_{\alpha }(\mathbb{C}^{+})$. Making the change of variables%
\begin{equation*}
z_{\mathbb{D}}=\frac{z-i}{z+i},w_{\mathbb{D}}=\frac{w-i}{w+i}\text{,}
\end{equation*}%
we move the eigenvalue problem to the unit disc%
\begin{equation*}
\int_{\Omega =T_{\alpha }\Delta }(T_{\alpha }F)(z_{\mathbb{D}})\frac{%
(1-\left\vert z_{\mathbb{D}}\right\vert )^{\alpha }}{(1-w_{\mathbb{D}}%
\overline{z_{\mathbb{D}}})^{2+\alpha }}dA_{\mathbb{D}}(z_{\mathbb{D}%
})=\lambda (T_{\alpha }F)(w_{\mathbb{D}})\text{,}
\end{equation*}%
where $T_{\alpha }F(w_{\mathbb{D}})\in A_{\alpha }(\mathbb{D})$. We simplify
the notation writing $z_{\mathbb{D}}=z,w_{\mathbb{D}}=z$. Now, using the
uniformly convergent expansion of the reproducing kernel, 
\begin{equation*}
\frac{1}{(1-w\overline{z})^{2+\alpha }}=\sum_{n=0}^{\infty }e_{n}^{\alpha
}(w)\overline{e_{n}^{\alpha }(z)},
\end{equation*}%
we can transform the eigenvalue equation into 
\begin{equation}
\lambda (T_{\alpha }F)(w)=\sum_{n=0}^{\infty }\frac{\Gamma (n+2+\alpha )}{%
n!\Gamma (2+\alpha )}w^{n}\int_{\Omega }(T_{\alpha }F)(z)\overline{z^{n}}%
(1-\left\vert z\right\vert )^{\alpha }dA_{\mathbb{D}}(z)  \label{eigenwav_2}
\end{equation}%
If one of the eigenfunctions of the localization operator $P_{\Delta ,\alpha
}$ belongs to the family $\{\psi _{n}^{\alpha }\}$, then 
\begin{equation*}
T_{\alpha }(Ber_{\alpha }\psi _{n}^{\alpha })(z)=T_{\alpha }(\Psi
_{n}^{\alpha })(z)=e_{n}^{\alpha }(z)
\end{equation*}%
solves (\ref{eigenwav_2}) for $\lambda =\lambda _{n}$. Setting $(T_{\alpha
}F)(z)=e_{m}^{\alpha }(z)$ gives 
\begin{equation*}
\lambda _{m}w^{m}=\sum_{n=0}^{\infty }\frac{\Gamma (n+2+\alpha )}{n!\Gamma
(2+\alpha )}w^{n}\int_{\Omega }z^{m}\overline{z}^{n}(1-\left\vert
z\right\vert )^{\alpha }dA_{\mathbb{D}}(z)\text{.}
\end{equation*}%
Comparison of coefficients yields 
\begin{equation*}
\int_{\Omega }\overline{z}^{n}z^{m}e^{-\pi \left\vert z\right\vert
^{2}}(1-\left\vert z\right\vert )^{\alpha }dA_{\mathbb{D}}(z)=\lambda _{m}%
\frac{n!\Gamma (2+\alpha )}{\Gamma (n+2+\alpha )}\delta _{n,m}
\end{equation*}%
and further, with $n=m+k$,  the condition of Proposition~\ref{Th:LocMon}: 
\begin{equation}
\int_{\Omega }\left\vert z\right\vert ^{2m}\overline{z}^{k}(1-\left\vert
z\right\vert )^{\alpha }dA_{\mathbb{D}}(z)=\lambda \delta _{k,0},\text{ for
all }k\geq 1\text{.}
\end{equation}%
Hence Proposition~\ref{Th:LocMon} can be applied and $\Omega $ must be a disk centered
at zero. Now we can go back to the upper half plane by the change of
variables 
\begin{equation*}
u=i\frac{z+1}{1-z}\in \mathbb{C}^{+}\text{,}
\end{equation*}%
which maps $0\mathbb{\in D}$ to $i\in \mathbb{C}^{+}$ and leaves the
pseudohyperbolic metric invariant. Finally, notice that the condition $%
\left\vert z\right\vert <r$, can be written in terms of the pseudohyperbolic
metric of the disc as $\varrho _{\mathbb{D}}(z,0)<r$. Hence, the disc $%
\Omega $ centered at zero is mapped to the pseudohyperbolic disc $\Delta
=\{\varrho _{\mathbb{C}^{+}}(u,i)<r\}$ centered at $u=i$.
\end{proof}
\begin{remark}
We can draw a conclusion similar to the one in Remark~\ref{Rem1} after Theorem~\ref{Th:main1}.
Indeed, it follows from 
Theorem~\ref{Th:main2} 
that, if the localization domain
 $\Delta $ is not a
pseudohyperbolic disc, then the function $f$ providing  optimal concentration in the sense of maximizing
\begin{equation}\label{Eq:TScon}
	\mathcal{C}_\Delta (f) = \frac{\int_\Delta |W_{\psi_\alpha} f (z)|^2 dz}{\|f\|_{H^{2}(%
\mathbb{C}^{+})}^2}\, 
\end{equation}
cannot be  the function  $\psi_\alpha$ in \eqref{fg0} - the so called Cauchy wavelet. On the
other hand it is known that the functions  $\psi_\alpha$ minimize the affine uncertainty
principle as first mentioned in \cite{pa84}. In this sense, pseudohyperbolic discs seem to be optimal domains
to measure wavelet localization.
\end{remark}	
\section*{Acknowledgments}
We would like to thank Saptarshi Das for his precious advice on UWB technology.
 \end{sloppypar}

\end{document}